\documentclass[12pt]{amsart}
\usepackage{amsfonts,amssymb,amscd,amsmath,enumerate,verbatim,calc}
\usepackage{graphicx}

\newtheorem{Theorem}{Theorem}[section]
\newtheorem{Lemma}[Theorem]{Lemma}
\newtheorem{Corollary}[Theorem]{Corollary}
\newtheorem{Proposition}[Theorem]{Proposition}

\textwidth=15cm \textheight=22cm \topmargin=0.5cm
\oddsidemargin=0.5cm \evensidemargin=0.5cm \pagestyle{plain}

\begin{document}
\title{Locating-Dominating Sets of Functigraphs}
\author{Muhammad Murtaza, Muhammad Fazil, Imran Javaid$^{*}$, Hira Benish}
\keywords{Location-domination number, Functigraph.\\
2010 {\it Mathematics Subject Classification.} 05C69, 05C12\\
$^*$ Corresponding author: imran.javaid@bzu.edu.pk}

\address{Centre for advanced studies in Pure and Applied Mathematics,
Bahauddin Zakariya University Multan, Pakistan
\newline Email: mahru830@gmail.com, mfazil@bzu.edu.pk, imran.javaid@bzu.edu.pk,
\newline hira\_benish@yahoo.com}

\date{}
\maketitle
\begin{abstract} A locating-dominating set of a graph $G$ is a dominating set of $G$ such that every vertex of
$G$ outside the dominating set is uniquely identified by its
neighborhood within the dominating set. The location-domination
number of $G$ is the minimum cardinality of a locating-dominating
set in $G$. Let $G_{1}$ and $G_{2}$ be the disjoint copies of a
graph $G$ and $f:V(G_{1})\rightarrow V(G_{2})$ be a function. A
functigraph $F^f_{G}$ consists of the vertex set $V(G_{1})\cup
V(G_{2})$ and the edge set $E(G_{1})\cup E(G_{2})\cup
\{uv:v=f(u)\}$. In this paper, we study the variation of the
location-domination number in passing from $G$ to $F^f_{G}$ and find
its sharp lower and upper bounds. We also study the
location-domination number of functigraphs of the complete graphs
for all possible definitions of the function $f$. We also obtain the
location-domination number of functigraph of a family of spanning
subgraph of the complete graphs.
\end{abstract}

\section{Introduction}
Locating-dominating sets were introduced by Slater \cite{slater1,
slater3}. The initial application of locating-dominating sets was
fault-diagnosis in the maintenance of multiprocessor systems
\cite{Karpov}. The purpose of fault detection is to test the system
and locate the faulty processors. Locating-dominating sets have
since been extended and applied. The decision problem for
locating-dominating sets for directed graphs has been shown to be an
NP-complete problem \cite{char2}. A considerable literature has been
developed in this field (see
\cite{ber,char1,col,fin,hon,rall,slater1,slater2}). In \cite{cac1},
it was pointed out that each locating-dominating set is both
locating and dominating set. However, a set that is both locating
and dominating is not necessarily a locating-dominating set.

We use $G$ to denote a connected graph with the vertex set $V(G)$
and the edge set $E(G)$. The \emph{degree} of a vertex $v$ in $G$,
denoted by $deg(v)$, is the number of edges to which $v$ belongs.
The \emph{open neighborhood} of a vertex $u$ of $G$ is $N(u)=\{v\in
V(G):uv\in E(G)\}$ and the \emph{closed neighborhood} of $u$ is
$N[u]=N(u)\cup \{u\}$. Two vertices $u,v$ are \emph{adjacent twins}
if $N[u]=N[v]$ and \emph{non-adjacent twins} if $N(u)=N(v)$. If
$u,v$ are adjacent or non-adjacent twins, then $u,v$ are
\emph{twins}. A set of vertices is called a \emph{twin-set} if every
two distinct vertices of the set are twins.

 Formally, we define a locating-dominating set as: A subset $L_D$ of
 the vertices of a graph $G$ is
called a locating-dominating set of $G$ if for every two distinct
vertices $u,v \in V(G)\setminus L_D$, we have $\emptyset \neq
N(u)\cap L_D\neq N(v)\cap L_D\neq\emptyset$. The {\it
location-domination number}, denoted by $\lambda(G)$, is the minimum
cardinality of a locating-dominating set of $G$.

The functigraph has its foundations back in the idea of permutation
graph \cite{char} and mapping graph \cite{dor}. A permutation graph
of a graph $G$ with $n$ vertices consists of two disjoint identical
copies of $G$ along with $n$ additional edges between the two copies
according to a given permutation on $n$ points. In a mapping graph,
the additional $n$ edges between the two copies are defined
according to a given function between the vertices of the two
copies. The mapping graph was rediscovered and studied by Chen et
al. \cite{yi}, where it was called the functigraph. Thus, a
functigraph is the generalized form of permutation graph in which
the function $f$ need not necessarily a permutation. In the recent
past, a number of graph variants were studied for functigraphs. Eroh
et al. \cite{Linda2} studied that how metric dimension behaves in
passing from a graph to its functigraph and investigated the metric
dimension of functigraphs on complete graphs and on cycles. Eroh et
al. \cite{Linda1} investigated the domination number of functigraph
of cycles in great detail, the functions which achieve the upper and
lower bounds. Qi et al. \cite{GuQu,QiWang} investigated the bounds
of chromatic number of functigraph. Kang et al. \cite{kang}
investigated the zero forcing number of functigraphs on complete
graphs, on cycles, and on paths. Fazil et al.
\cite{fazilfixing,fazildist} have studied fixing number and
distinguishing number of functigraphs. The aim of this paper is to
study the variation of location-domination number in passing from a
graph to its functigraph and to find its sharp lower and upper
bounds.

Formally, a functigraph is defined as: Let $G_{1}$ and $G_{2}$ be
the disjoint copies of a connected graph $G$ and let
$f:V(G_{1})\rightarrow V(G_{2})$ be a function. A \emph{functigraph}
$F^f_{G}$ of the graph $G$ consists of the vertex set $V(G_{1})\cup
V(G_{2})$ and the edge set $E(G_{1})\cup E(G_{2})\cup
\{uv:v=f(u)\}$. Unless otherwise specified, all the graphs $G$
considered in this paper are simple, non-trivial and connected.
Throughout the paper, we will denote $V(G_{1})=A_1$, $V(G_{2})=A_2$,
$f(V(G_{1}))= I$, $|I|= k$, a locating-dominating set of $F^f_{G}$
with the minimum cardinality by $L_D^{\ast}$, the elements of $A_1$
and $A_2$ are denoted by $u$ and $v$, respectively and each section
of this paper has different labeling for the elements of $A_1$ and
$A_2$.

This paper is organized as follows. Section 2 gives the sharp lower
and upper bounds for the location-domination number of functigraphs.
This section also establishes the connection between the
location-domination number of graphs and their corresponding
functigraphs in the form of realizable result. Section 3 provides
the location-domination number of functigraphs of the complete
graphs for all possible definitions of the function $f$. In Section
4, we investigate the location-domination number of the functigraph
of a family of spanning subgraphs of the complete graphs for all
possible definitions of constant function $f$.
\section{Some basic results and bounds}

By the definitions of twin vertices and twin-set, we have the
following straightforward result:

\begin{Proposition}\label{Prop2}\cite{Murtaza}
Let $T$ be a twin-set of cardinality $m\geq 2$ in a connected graph
$G$. Then, every locating-dominating set $L_D$ of $G$ contains at
least $m-1$ vertices of $T$.
\end{Proposition}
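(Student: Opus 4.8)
The plan is to argue by contradiction. Suppose some locating-dominating set $L_D$ of $G$ contains at most $m-2$ vertices of $T$. Since $|T|=m$, the set $T\setminus L_D$ then contains two distinct vertices $u$ and $v$, and both lie in $V(G)\setminus L_D$. Hence the defining property of a locating-dominating set yields $\emptyset\neq N(u)\cap L_D\neq N(v)\cap L_D\neq\emptyset$; in particular $N(u)\cap L_D\neq N(v)\cap L_D$. I would then derive a contradiction by showing that the twin relation between $u$ and $v$ forces the equality $N(u)\cap L_D = N(v)\cap L_D$.

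The key step is a short case analysis on the type of twins. If $u$ and $v$ are non-adjacent twins, then $N(u)=N(v)$, so trivially $N(u)\cap L_D=N(v)\cap L_D$. If $u$ and $v$ are adjacent twins, then $N[u]=N[v]$; since $u\neq v$ this gives $v\in N(u)$, $u\in N(v)$, and $N(u)\setminus\{v\}=N(v)\setminus\{u\}$. Intersecting this last equality with $L_D$ and using that $u\notin L_D$ and $v\notin L_D$, so that deleting $u$ and $v$ does not affect the intersection with $L_D$, we again obtain $N(u)\cap L_D=N(v)\cap L_D$. In either case this contradicts the displayed inequality, so every locating-dominating set omits at most one vertex of $T$; that is, it contains at least $m-1$ vertices of $T$.

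The proof is essentially a direct unwinding of the definitions, so I do not expect a substantial obstacle. The only point requiring a little care is the adjacent-twin case, where the hypothesis $u,v\notin L_D$ is exactly what licenses discarding $u$ and $v$ from the neighbourhood computation; one should state this explicitly rather than treat it as obvious. (That the bound $m-1$ is best possible — a single vertex of $T$ may be left outside $L_D$ — is not needed for the statement and can be mentioned separately if desired.)
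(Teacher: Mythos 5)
Your argument is correct: the paper states this proposition without proof, citing \cite{Murtaza} and calling it a straightforward consequence of the definitions, and your contradiction argument (two omitted twins would have equal traces $N(u)\cap L_D=N(v)\cap L_D$, violating the locating condition) is exactly the standard justification. The care you take in the adjacent-twin case --- using $u,v\notin L_D$ to discard $u$ and $v$ when intersecting $N(u)\setminus\{v\}=N(v)\setminus\{u\}$ with $L_D$ --- is the one genuinely non-trivial point, and you handle it correctly.
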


\begin{Theorem}\cite{slater1}\label{ThmCompleteLDN}
Let $G$ be a graph of order $n\ge 2$, then $\lambda(G)=n-1$ if and
only if $G=K_n$ or $G=K_{1,n-1}$, where $K_n$ and $K_{1,n-1}$ are
the complete graph and complete bipartite graph of order $n$.
\end{Theorem}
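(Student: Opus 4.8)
The plan is to prove the two implications separately; the reverse one is a short direct verification and the forward one needs a small structural argument. For the reverse implication, if $G=K_{n}$ then $V(G)$ is a twin-set of cardinality $n$, so Proposition~\ref{Prop2} gives $|L_{D}|\ge n-1$ for every locating-dominating set $L_{D}$, while any $(n-1)$-subset of $V(G)$ is locating-dominating since the single vertex left outside it is dominated and the separation condition is vacuous; hence $\lambda(K_{n})=n-1$. If $G=K_{1,n-1}$ with center $c$ and leaves $\ell_{1},\dots,\ell_{n-1}$, then a locating-dominating set $L_{D}$ omitting $c$ must contain every leaf (a leaf outside $L_{D}$ would not be dominated), while one containing $c$ can omit at most one leaf (two omitted leaves both have trace $\{c\}$ on $L_{D}$); in either case $|L_{D}|\ge n-1$, and the set of all leaves attains this, so $\lambda(K_{1,n-1})=n-1$.

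For the forward implication, assume $\lambda(G)=n-1$ and, for contradiction, that $G$ is neither $K_{n}$ nor $K_{1,n-1}$; I will exhibit a locating-dominating set of size $n-2$. The key observation is that, for distinct $x,y\in V(G)$, the set $L_{D}=V(G)\setminus\{x,y\}$ is locating-dominating if and only if each of $x,y$ has a neighbor in $V(G)\setminus\{x,y\}$ (domination of the two outside vertices) and $N(x)\setminus\{y\}\ne N(y)\setminus\{x\}$ (separation); and a short case split on whether $x\sim y$ shows that this last inequality fails exactly when $x$ and $y$ are twins. So it suffices to find a non-twin pair $x,y$ in which each vertex has a neighbor other than the other.

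To produce such a pair, note first that since $G$ is connected with $n\ge 2$ and $G\ne K_{n}$, the vertex set $V(G)$ is not a twin-set (if it were, $G$ would be complete or edgeless, but $K_{n}$ is excluded and an edgeless graph on $n\ge 2$ vertices is disconnected), so $G$ has a non-twin pair. Suppose, toward a contradiction, that no non-twin pair has both of its vertices with a neighbor outside the pair; then for each non-twin pair at least one vertex $x$ satisfies $N(x)=\{y\}$, i.e.\ $x$ is a leaf whose neighbor is the other vertex $y$. Fix such a pair $\{x,y\}$. For any $z\in V(G)\setminus\{x,y\}$ we have $x\not\sim z$ because $N(x)=\{y\}$; if moreover $N(z)\ne\{y\}$, then $\{x,z\}$ is a non-twin pair in which $x$ is not a leaf attached to $z$, so by the assumption $z$ must be a leaf attached to $x$, giving $z\sim x$, a contradiction. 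Hence $N(z)=\{y\}$ for every $z\in V(G)\setminus\{x,y\}$, which says precisely that $G=K_{1,n-1}$ with center $y$ --- contrary to our assumption. This contradiction proves the forward implication.

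The conceptual core is the equivalence in the second paragraph; the part I expect to require the most care is the propagation step in the third paragraph, where one must check that the ``pendant at $y$'' condition, once forced for the single vertex $x$, is in fact forced for every remaining vertex, together with the auxiliary facts used along the way (that a twin-set induces a complete or edgeless graph, and that the degenerate case $n=2$ is already covered by $G=K_{n}$).
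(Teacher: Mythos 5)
The paper does not prove this statement at all: it is quoted from Slater's paper \cite{slater1} and used as a black box, so there is no in-paper argument to compare against. Your proposal is a correct, self-contained proof. The reverse direction is handled exactly as one would expect (Proposition~\ref{Prop2} applied to the twin-set $V(K_n)$, and the two-case count for the star according to whether the center is in $L_D$). The forward direction rests on a clean and correct characterization: $V(G)\setminus\{x,y\}$ is locating-dominating iff $x,y$ are non-twins each having a neighbor outside the pair, since $N(x)\cap L_D=N(x)\setminus\{y\}$ and $N(y)\cap L_D=N(y)\setminus\{x\}$, and the case split on $x\sim y$ correctly identifies failure of separation with being adjacent or non-adjacent twins. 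The two supporting facts you flag as needing care both hold: a twin-set induces a complete or edgeless subgraph (mixing an adjacent twin pair with a non-adjacent one forces a contradiction via $N[u]=N[v]$ versus $N(u)=N(w)$), so connectedness plus $G\neq K_n$ yields a non-twin pair; and the propagation step correctly forces $N(z)=\{y\}$ for every $z\notin\{x,y\}$, which is precisely $K_{1,n-1}$. The only caveat is that your argument uses connectivity (e.g., to rule out the edgeless twin-set and to upgrade $N(z)\subseteq\{x\}$ to $N(z)=\{x\}$), which is covered by the paper's standing assumption that all graphs considered are connected; the theorem as literally stated says only ``graph,'' so strictly one should either invoke that convention explicitly or handle isolated vertices. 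This is a presentational point, not a gap.
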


\begin{Lemma}\label{lemmaLowerboundlamda}
Let $G$ be a graph of order $n\ge 2$ and $F_G^f$ be its
corresponding functigraph. If $\lambda$ is the location-domination
number of $F_G^f$, then $ 2n+1 \le 2^{\lambda}+\lambda$.
\end{Lemma}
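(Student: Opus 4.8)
The plan is to bound the number of vertices of $F_G^f$ that can lie outside a locating-dominating set $L_D^\ast$ in terms of $\lambda = |L_D^\ast|$, and then solve the resulting inequality for $n$. Write $V(F_G^f) = A_1 \cup A_2$, so that $|V(F_G^f)| = 2n$, and let $L_D^\ast$ be a locating-dominating set of minimum cardinality $\lambda$. Every vertex $w \in V(F_G^f) \setminus L_D^\ast$ is assigned the nonempty set $N(w) \cap L_D^\ast \subseteq L_D^\ast$, and by the definition of a locating-dominating set this assignment is injective on $V(F_G^f) \setminus L_D^\ast$. Since $L_D^\ast$ has $\lambda$ elements, it has exactly $2^\lambda - 1$ nonempty subsets, hence
\[
|V(F_G^f) \setminus L_D^\ast| \le 2^\lambda - 1.
\]

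**Concluding the bound.**

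Now $|V(F_G^f) \setminus L_D^\ast| = 2n - \lambda$, so the displayed inequality becomes $2n - \lambda \le 2^\lambda - 1$, which rearranges exactly to $2n + 1 \le 2^\lambda + \lambda$, as required. This is simply the standard lower-bound argument for $\lambda(H) \ge \log_2(|V(H)|+1) - O(\log\log)$ specialized to $H = F_G^f$ with $|V(H)| = 2n$; in fact the same reasoning shows $\lambda(H) + 2^{\lambda(H)} \ge |V(H)| + 1$ for any graph $H$.

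**Where the difficulty lies.**

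Honestly there is no serious obstacle here: the only thing to be careful about is the "$+1$" bookkeeping, i.e. remembering that the empty subset of $L_D^\ast$ is not available as a code (domination forces $N(w) \cap L_D^\ast \ne \emptyset$), which is what turns $2^\lambda$ into $2^\lambda - 1$ and produces the $+1$ on the left-hand side. One should also note that the argument uses nothing about the functigraph structure beyond its order $2n$, so the bound is stated in a form that will later be combined with structural information to get sharper or matching results; the lemma itself is just the counting skeleton.
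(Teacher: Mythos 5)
Your proof is correct and follows essentially the same counting argument as the paper: each vertex outside a minimum locating-dominating set is assigned a distinct nonempty subset of the set, giving $2n-\lambda\le 2^{\lambda}-1$, which rearranges to the stated inequality. No issues.
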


\begin{proof}
Let $L\subset V(F_G^f)$ be a non-empty set and $|L|=\lambda$. Let
$v\in V(F_G^f)\setminus L$, then $N(v)\cap L$ is a subset of $L$. If
$L$ is a locating-dominating set of $F_G^f$, then $N(v)\cap L$ for
all $v\in V(F_G^f)\setminus L$ must be non-empty distinct subsets of
$L$, which is possible only when the number of non-empty subsets of
$L$ are greater than or equal to the number of vertices in
$V(F_G^f)\setminus L$. Since the number of non-empty subsets of $L$
is $2^{\lambda}-1$ and the number of vertices in $V(F_G^f)\setminus
L$ is $2n-\lambda$. Therefore the result follows.
\end{proof}

\begin{figure}
  \includegraphics[width=14cm]{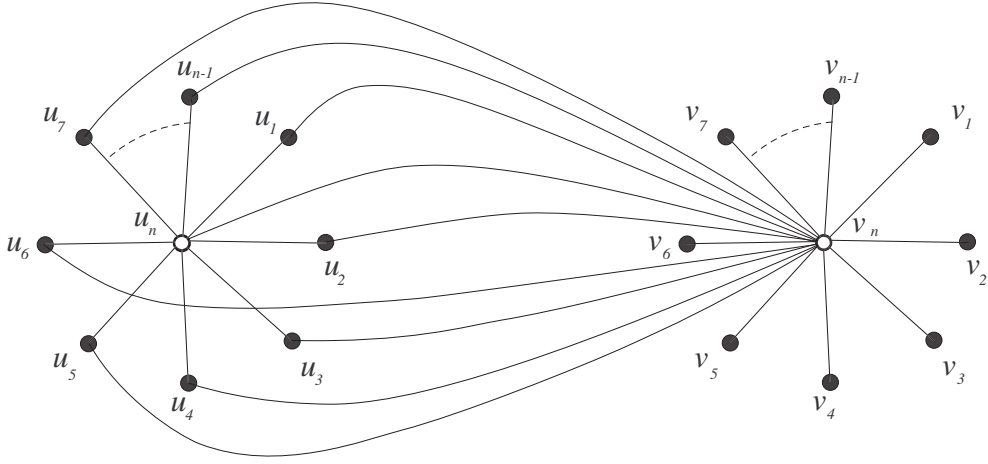}
  \caption{The functigraph of $K_{1,n-1}$ when $f$ is constant and
  $I=\{v_n\}$. Black vertices form a locating-dominating set with the minimum cardinality.}
  \label{Fig1}
\end{figure}

\begin{Theorem}\label{Bounds on lambda}
Let $G$ be a graph of order $n\ge 3$, then $3\le \lambda (F_G^f)\le
2n-2$. Both bounds are sharp.
\end{Theorem}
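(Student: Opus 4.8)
The plan is to establish the two bounds separately, in each case exhibiting a graph $G$ and a function $f$ that achieves equality.

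\emph{Lower bound.} First I would show $\lambda(F_G^f)\ge 3$ for every connected graph $G$ of order $n\ge 3$. Since $F_G^f$ has $2n\ge 6$ vertices, Lemma~\ref{lemmaLowerboundlamda} gives $2^{\lambda}+\lambda\ge 2n+1\ge 13$, which already forces $\lambda\ge 3$; alternatively one argues directly that no set of size $1$ or $2$ can locate-dominate a connected graph on at least four vertices (with $|L|\le 2$ there are at most three nonempty subsets of $L$ but at least $2n-2\ge 4$ vertices outside $L$ to distinguish). For sharpness of the lower bound I would take $G=K_{1,n-1}$ with $f$ the constant map onto a fixed vertex $v_n\in A_2$, the configuration shown in Figure~\ref{Fig1}: one checks that a suitable set of three vertices (the image vertex together with two appropriately chosen leaves/centers) is locating-dominating, so $\lambda(F_G^f)=3$ for that choice.

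\emph{Upper bound.} The core of the argument is to show that for \emph{every} $f$ one can build a locating-dominating set of $F_G^f$ of size at most $2n-2$. The natural candidate is $L = (A_1\setminus\{x\})\cup(A_2\setminus\{y\})$ for cleverly chosen $x\in A_1$ and $y\in A_2$: this set has size $2n-2$, so it suffices to check it is locating-dominating. Every vertex of $A_1$ other than $x$ and every vertex of $A_2$ other than $y$ lies in $L$, so only $x$ and $y$ need to be dominated and distinguished. Domination: $x$ is dominated inside $L$ as long as $x$ has a neighbor in $A_1\setminus\{x\}$ (true whenever $n\ge 3$ and $G$ is connected, unless $x$ is a specific troublesome vertex) or its image $f(x)\ne y$ lies in $L$; similarly $y$ is dominated provided it has a $G_2$-neighbor different from $y$, or some $u\in A_1\setminus\{x\}$ has $f(u)=y$. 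Distinguishing $x$ from $y$: since $x\in A_1$ and $y\in A_2$, as long as $x$ has at least one $G_1$-neighbor in $L$ we get $N(x)\cap L\ne N(y)\cap L$ unless $y$'s neighborhood in $L$ happens to coincide, which one rules out by the choice of $x,y$. I would choose $x$ to be a non-cut vertex of $G_1$ (every connected graph on $\ge 2$ vertices has one) and $y$ to be a non-cut vertex of $G_2$, and handle by direct case analysis the few remaining degenerate situations (e.g.\ when $f$ maps everything to $y$, or when $x$ has all its $G_1$-neighbors equal to... ). For sharpness of the upper bound I would invoke Theorem~\ref{ThmCompleteLDN}: take $G=K_n$, so $\lambda(G)=n-1$, and choose $f$ (for instance a constant function) so that large twin-sets survive in $F_G^f$; Proposition~\ref{Prop2} applied to the two cliques $A_1$, $A_2$ then forces any locating-dominating set to contain at least $(n-1)+(n-1)=2n-2$ vertices, giving $\lambda(F_{K_n}^f)=2n-2$.

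\emph{Main obstacle.} The delicate part is the upper bound: verifying that $(A_1\setminus\{x\})\cup(A_2\setminus\{y\})$ is locating-dominating requires ruling out the coincidence $N(x)\cap L = N(y)\cap L$ and guaranteeing both $x$ and $y$ are dominated, and these can fail for careless choices of $x,y$ when $f$ is highly degenerate (e.g.\ constant, or with tiny image). I expect to need a short case split according to whether $f(x)=y$, whether $y\in I$, and whether $x$ is adjacent in $G_1$ to the (at most one) vertex whose removal matters — but in each case a local re-selection of $x$ or $y$, using connectivity of $G$ and $n\ge 3$, repairs the set without exceeding size $2n-2$.
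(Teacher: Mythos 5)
Your lower-bound argument via Lemma \ref{lemmaLowerboundlamda} is fine, but both of your sharpness examples are wrong, and they are wrong in a revealing way: you have swapped the roles of the two extremal graphs. The configuration of Figure \ref{Fig1} ($G=K_{1,n-1}$, $f$ constant onto the centre $v_n$ of $A_2$) does \emph{not} have location-domination number $3$: the $n-1$ leaves of $G_1$ all have neighbourhood $\{u_n,v_n\}$ and the $n-1$ leaves of $G_2$ all have neighbourhood $\{v_n\}$, so these are two disjoint twin-sets and Proposition \ref{Prop2} already forces $\lambda(F_G^f)\ge 2n-4$; the paper shows this example in fact attains $2n-2$, i.e.\ it is the \emph{upper}-bound extremal example. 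No three-element set can work there for $n\ge 3$. The paper's lower-bound witness is instead $G=P_3$ with $f$ the identity. Symmetrically, your upper-bound sharpness example fails: for $G=K_n$ with $f$ constant onto $v_1$, the set $A_2$ is \emph{not} a twin-set of $F_G^f$ (the image vertex $v_1$ has extra neighbours in $A_1$), so Proposition \ref{Prop2} only gives $(n-1)+(n-2)=2n-3$ from the twin-sets $A_1$ and $A_2\setminus\{v_1\}$, and indeed Lemma \ref{LemmaCompleteConstant} shows $\lambda(F_{K_n}^f)=2n-3$ for every $n\ge 3$; moreover Theorems \ref{f12} and \ref{f13} and Lemma \ref{LemmaCompleteBijective} show that no choice of $f$ on $K_n$ ever reaches $2n-2$. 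So neither bound's sharpness is established by your examples.

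On the general upper bound $\lambda(F_G^f)\le 2n-2$, your idea of taking $L=(A_1\setminus\{x\})\cup(A_2\setminus\{y\})$ is actually sound and, if completed, would be \emph{more} rigorous than the paper's proof (which only inspects the cases $\lambda(G)=n-1$ and never verifies the bound for arbitrary $G$ and $f$). Your case analysis can be short-circuited: since $G$ is connected of order $n\ge 3$ it has a vertex of degree at least $2$; choose $y\in A_2$ to be such a vertex and $x\in A_1$ arbitrary. Then $x$ is dominated by its $G_1$-neighbours, $y$ by its $G_2$-neighbours, and $|N(y)\cap L\cap A_2|\ge 2$ while $|N(x)\cap L\cap A_2|\le|\{f(x)\}|\le 1$, so $N(x)\cap L\ne N(y)\cap L$. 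You should finish the argument this way rather than leaving the degenerate cases open, and you must replace both sharpness witnesses: $P_3$ with the identity map for the lower bound, and $K_{1,n-1}$ with a constant map onto the centre for the upper bound.
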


\begin{proof}
Since $n\ge 3$, therefore by Lemma \ref{lemmaLowerboundlamda}, $7\le
2^{\lambda}+\lambda$ which yields $3\le \lambda$. For the sharpness
of the lower bound, let $G=P_3$ be the path graph of order 3 and $f$
be identity function, then $\lambda(F_G^f)=3$. For the upper bound,
we consider the most worse cases in which $\lambda(G)=n-1$ and $f$
is a constant function. If $\lambda(G)=n-1$, then by Theorem
\ref{ThmCompleteLDN}, $G$ is either $K_n$ or $K_{1,n-1}$. It is
proved in Lemma \ref{LemmaCompleteConstant} that $\lambda
(F_G^f)=2n-3$, whenever $G=K_n$ and $f$ is a constant function.
Therefore, we consider $G=K_{1,n-1}$ and $f$ is constant. Let
$V(G_1)=A_1=\{u_1,...,u_{n-1}\}\cup \{u_n\}$ where each of the
vertices $u_1,...,u_{n-1}$ is adjacent to $u_n$. Similarly, label
the corresponding vertices of $A_2=\{v_1,...,v_{n-1}\}\cup \{v_n\}$.
We define a constant function $f:A_1\rightarrow A_2$ by $f(u_i)=v_n$
for all $1\le i\le n$. The corresponding functigraph $F_G^f$ is
shown in the Figure \ref{Fig1}. Our claim is $\lambda (F_G^f)=2n-2$.
Since $\{u_1,...,u_{n-1}\}\cup \{v_1,...,v_{n-1}\}$ is a
locating-dominating set of $F_G^f$, therefore $\lambda (F_G^f)\le
2n-2$. Let $L_D$ be a locating-dominating set of $F_G^f$. Since
$F_G^f$ contains disjoint twin sets $\{u_1,...,u_{n-1}\}$ and
$\{v_1,...,v_{n-1}\}$, therefore by Proposition \ref{Prop2}, $L_D$
must contains at least $n-2$ vertices from each of these twin sets
and hence $\lambda (F_G^f)\ge 2n-4$. Without loss of generality,
assume $L_D$ contains $\{u_1,...,u_{n-2}\}$ and
$\{v_1,...,v_{n-2}\}$ from each of these twin sets. We claim that
$L_D$ contains at least two vertices from the set
$B=\{u_{n-1},u_n,v_{n-1},v_n\}$. If $|L_D\cap B|=0$, then
$N(u_{n-1})\cap L_D=N(v_{n-1})\cap L_D=\emptyset$, a contradiction.
If $|L_D\cap B|=1$, then there are the following possible cases. If
$L_D\cap B=\{u_{n-1}\}$ or $L_D\cap B=\{u_{n}\}$, then
$N(v_{n-1})\cap L_D=\emptyset$, a contradiction. If $L_D\cap
T=\{v_{n}\}$, then $N(u_{n-1})\cap L_D=N(v_{n-1})\cap L_D$, a
contradiction. If $L_D\cap B=\{v_{n-1}\}$, then $N(u_{n-1})\cap
L_D=\emptyset$, a contradiction. Thus, $|L_D\cap B|\ge 2$ and
consequently $|L_D|\ge 2n-2$. Hence, $\lambda(F_G^f)=2n-2$ and the
result follows.
\end{proof}
\begin{Lemma}
For any integer $t\ge 2$, there exist a connected graph $G$ such
that $\lambda(F_G^f)-\lambda(G)=t$.
\end{Lemma}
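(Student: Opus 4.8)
The plan is to exhibit, for each integer $t\ge 2$, an explicit connected graph $G$ together with an explicit function $f$ so that $\lambda(F_G^f)-\lambda(G)$ hits $t$ exactly. The natural candidate is a star-like graph, since stars force the location-domination number up to one less than the order (by Theorem \ref{ThmCompleteLDN}), and in the already-analysed example of Figure \ref{Fig1} passing to the functigraph of $K_{1,n-1}$ under a constant function raised $\lambda$ from $n-1$ to $2n-2$, a jump of exactly $n-1$. So I would take $G=K_{1,n-1}$ with $n-1=t$, i.e. $G=K_{1,t}$ of order $t+1$, and let $f$ be the constant function sending every vertex of $A_1$ to the copy $v_n$ of the center of $G_2$ (exactly the construction already drawn in Figure \ref{Fig1}).

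First I would record $\lambda(G)=\lambda(K_{1,t})=t$: this is immediate from Theorem \ref{ThmCompleteLDN} since $K_{1,t}=K_{1,n-1}$ with $n=t+1\ge 3$. Next I would invoke the computation carried out inside the proof of Theorem \ref{Bounds on lambda}: for $G=K_{1,n-1}$ and $f$ constant with $I=\{v_n\}$, it was shown there that $\lambda(F_G^f)=2n-2$. Substituting $n=t+1$ gives $\lambda(F_G^f)=2(t+1)-2=2t$. Therefore $\lambda(F_G^f)-\lambda(G)=2t-t=t$, which is the desired equality, and since $t\ge 2$ forces $n=t+1\ge 3$ the hypotheses of the cited results are all met.

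The only subtlety is that one should make sure the functigraph under consideration is genuinely the one analysed in Theorem \ref{Bounds on lambda}; since that theorem's proof fixes the same labelling $A_1=\{u_1,\dots,u_{n-1}\}\cup\{u_n\}$, the same star structure, and the same constant function $f(u_i)=v_n$, no new work is needed — I would simply cite it. Thus the proof is essentially a one-line specialization of the upper-bound analysis in Theorem \ref{Bounds on lambda}, and there is no real obstacle; the main thing to be careful about is the arithmetic bookkeeping between the parameter $t$ and the order $n$ of the base graph. If one wanted the writeup to be self-contained one could instead re-derive $\lambda(F_{K_{1,t}}^f)=2t$ directly by the twin-set argument (Proposition \ref{Prop2} forces $t-1$ vertices from each of the two twin-sets $\{u_1,\dots,u_{t}\}$ and $\{v_1,\dots,v_{t}\}$, and then a short case check on the four remaining vertices $\{u_t,u_{t+1},v_t,v_{t+1}\}$ forces two more), but citing Theorem \ref{Bounds on lambda} is cleaner.
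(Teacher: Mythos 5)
Your proposal is correct, but it takes a genuinely different route from the paper. The paper builds a new graph for this lemma: a ``broom'' obtained from the path $u_1u_2u_3$ by attaching $t-1$ pendant vertices to $u_1$, with $f$ the constant map onto $v_1$; it then proves $\lambda(G)=t$ and $\lambda(F_G^f)=2t$ from scratch via Proposition \ref{Prop2} and a four-case analysis on the non-twin vertices, carried out once for $G$ and once (by the ``same argument in each copy'') for $F_G^f$. You instead take $G=K_{1,t}$ with the constant map onto the centre of the second copy, read off $\lambda(K_{1,t})=t$ from Theorem \ref{ThmCompleteLDN}, and read off $\lambda(F_{K_{1,t}}^f)=2(t+1)-2=2t$ from the exact computation already performed (with the identical labelling and identical constant function) inside the proof of Theorem \ref{Bounds on lambda}; the arithmetic $2t-t=t$ and the check $n=t+1\ge 3$ are all that remain. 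Your version is shorter and reuses work the paper has already done, at the cost of citing a fact that lives inside another proof rather than in a displayed statement; the paper's version is self-contained and incidentally exhibits a second, non-star family realizing the gap $t$. Both arguments are sound, and your fallback sketch (Proposition \ref{Prop2} forcing $t-1$ vertices from each of the two twin-sets of leaves plus a case check on the four remaining vertices) is precisely how the paper's Theorem \ref{Bounds on lambda} proof goes, so nothing is missing.
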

\begin{proof}
We construct the graph $G$ by taking the path graph $P_3$ and label
its vertices as $u_1,u_2$ and $u_3$. Attach $t-1$ pendants with
$u_1$ and label them as $u_{1,i}$ where $1\le i \le t-1$. This
completes the construction of the graph $G$. Take another copy of
$G$ and label the corresponding vertices with $v_1,v_2,v_3$ and
$v_{1,i}$ where $1\le i \le t-1$. Define a constant function
$f:A_1\rightarrow A_2$ which maps every vertex of $A_1$ to $v_1\in
A_2$. First we prove that $\lambda(G)=t$. Consider the set
$\{u_1,u_3,u_{1,1},...,u_{1,t-2}\}$, then the reader can easily
verify that this is a locating-dominating set of cardinality $t$ and
hence $\lambda(G)\le t$. Let $L_D$ be a locating-dominating set of
$G$. Since $G$ contains $\{u_{1,1},...,u_{1,t-1}\}$ twin vertices,
therefore by Proposition \ref{Prop2} $\lambda(G)\ge t-2$. Without
loss of generality, assume $L_D\cap
\{u_{1,1},...,u_{1,t-1}\}=\{u_{1,1},...,u_{1,t-2}\}$. Our claim is
$L_D$ contains atleast two elements from
$B=\{u_1,u_2,u_3,u_{1,t-1}\}$. If $|L_D\cap B|=0$, then
$N(u_{1,t-1})\cap L_D=\emptyset$, a contradiction. If $|L_D\cap
B|=1$, then we discuss four possible cases. If $L_D\cap
B=\{u_{1,t-1}\}$, then $L_D\cap N(u_2)=\emptyset$, a contradiction.
If $L_D\cap B=\{u_1\}$, then $L_D\cap N(u_3)=\emptyset$, a
contradiction. If $L_D\cap B=\{u_2\}$ or $L_D\cap B=\{u_3\}$, then
$L_D\cap N(u_{1,t-1})=\emptyset$, a contradition. Thus, $|L_D\cap
B|\ge 2$ and consequently $|L_D|\ge t$. Thus, $\lambda(G)=t$.

Next we prove that $\lambda(F_G^f)=2t$. Consider the set
$\{u_1,u_3,u_{1,1},...,u_{1,t-2},v_1,v_3,$
$v_{1,1},...,v_{1,t-2}\}$, then the reader can easily verify that
this is a locating-dominating set of $F_G^f$ of cardinality $2t$ and
hence $\lambda(F_G^f)\le 2t$. Since $f$ is a constant function,
therefore the sets $\{u_{1,1},...,u_{1,t-1}\}$ and
$\{v_{1,1},...,v_{1,t-1}\}$ are also disjoint twin sets of $F_G^f$
each of cardinality $t-1$, therefore by Proposition \ref{Prop2},
$\lambda(F_G^f)\ge 2t-4$. Let $L_D$ be a locating-dominating set of
$F_G^f$. Without loss of generality, assume $L_D\cap
\{u_{1,1},...,u_{1,t-1}\}=\{u_{1,1},...,u_{1,t-2}\}$ and $L_D\cap
\{v_{1,1},...,v_{1,t-1}\}=\{v_{1,1},...,v_{1,t-2}\}$. As $f$ is a
constant function, therefore by using the similar arguments as in
the case of graph $G$, the locating-dominating set $L_D$ of $F_G^f$
must contains atleast two elements from each of the sets
$\{u_1,u_2,u_3,u_{1,t-1}\}$ and $\{v_1,v_2,v_3,v_{1,t-1}\}$ and
consequently, $|L_D|\ge 2t$. Thus, $\lambda(F_G^f)=2t$ and the
result follows.
\end{proof}
\section{The location-domination number of functigraphs of the complete graphs}
We find the location-domination number of functigraph of the
complete graphs for all possible definitions of the function $f$. In
this section, we use the following terminology for labeling the
vertices of functigraph. Let $G$ be a complete graph of order $n$
and $f:A_1\rightarrow A_2$ be a function. Let $v\in I\subset A_2$,
then we denote the set $\{f^{-1}(v)\}\subset A_1$ by $\Psi_{v}$ and
its cardinality by $s=|\Psi_{v}|$ $(1\le s\le n)$. If $s=1$ for some
$v\in I$, then we name the edge $vf^{-1}(v)\in E(F_G^f)$ as a
\emph{functi matching} of $F_G^f$. The discussion has two parts, the
first part discuss the cases in which $F_G^f$ does not have any
functi matching and in the second part $F_G^f$ have at least one
functi matching.
\begin{figure}
  \includegraphics[width=14cm]{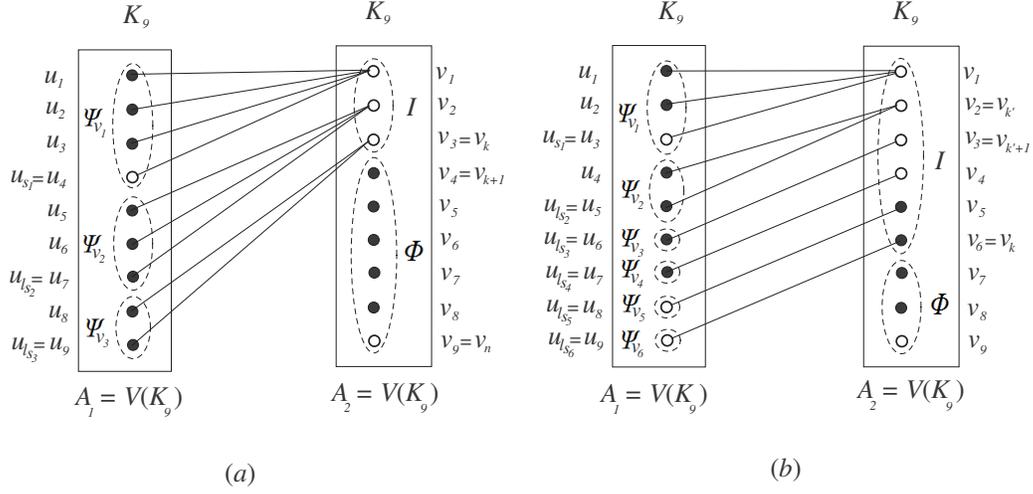}
  \caption{The labeling of the vertices of functigraph of $K_9$ when ($a$) $F_G^f$
  does not have any functi matching for $k=3$  ($b$) $F_G^f$
  has functi matching for $k=6$ and $k'=2$. Black vertices form a locating-dominating
  set of $F^f_G$ with the minimum cardinality.}
  \label{Fig2}
\end{figure}
 For the first part of discussion, let $F_G^f$ does not have any functi
 matching. In this case, we label
  the vertice of $I$
as: $I=\{v_1, v_2,...,v_{k}\}$ where the subscript index is assigned
to each $v$ according to the index of corresponding
$s_i=|\Psi_{v_i}|$ $(1\le i \le k)$, where $s_i$ are assinged
indices according as $s_1\ge s_2 \ge ...\ge s_k$. The set
$A_2\setminus I$ is a twin set of $F_G^f$ and we denote the set by
$\Phi=A_2\setminus I$. The vertices of $\Phi$ are labeled as
$\Phi=\{v_{k+1},v_{k+2},...,v_n\}$. The vertices of $A_1$ are
labeled as: $\Psi_{v_1}=\{u_1,u_2...,u_{s_1}\}$ and for each
$i=2,...,k$, $\Psi_{v_i}=\{u_{l_1},u_{l_2},...,u_{l_{s_{i}}}\}$,
where the indices $l_j$ $(j=1,2,...,s_i)$ for fix $i$ are given by
$l_j=\sum_{m=1}^{i-1}s_m+j$. The labeling of vertices of a
functigraph of the complete graph $K_9$ for $k=3$ is illustrated in
the Figure \ref{Fig2}$(a)$ where the functigraph does not have any
functi matching. It can be seen that for each $i$ $(1\le i\le k)$,
$s_i\ge 2$, $\Psi_{v_i}\subset A_1$ is a twin-set of vertices. Also,
$\cup_{i=1}^k \Psi_{v_i}=A_1$ and $\sum_{i=1}^k s_i=n$.
\begin{Lemma}\label{LemmaCompleteConstant} Let $G=K_{n}$ be the complete
 graph of order $n\geq 2$, and $f:A_1\rightarrow A_2$ be a constant function, then
$$\lambda(F_G^f)=\left\{
\begin{array}{ll}
2n-2,\,\,\,\,\,\,\,\,\,\,\,\,\,\,\,\,\,\,\,if \,\,\,\, n=2 &  \\
2n-3,\,\,\,\,\,\,\,\,\,\,\,\,\,\,\,\,\,\,\,if \,\,\,\, n\ge 3 \\
\end{array}
\right.$$
\end{Lemma}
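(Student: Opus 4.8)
The plan is to handle the two cases separately. The case $n=2$ is immediate: here $G=K_2$, both copies $G_1,G_2$ are single edges, and $f$ maps both vertices of $A_1$ to a single vertex, say $v_1\in A_2$. The functigraph $F_G^f$ on four vertices is easily seen to contain $K_{1,3}$ (or a near-complete structure) so that $\lambda(F_G^f)=3=2n-1$... wait, that would contradict $2n-2=2$; so I should instead just exhibit a locating-dominating set of size $2$ and argue the lower bound $\lambda\ge 2$ directly, checking that no two-element set fails — in fact for $n=2$ one can appeal to Theorem~\ref{Bounds on lambda} only for $n\ge 3$, so for $n=2$ I would simply enumerate: $F_G^f$ has $4$ vertices, by Lemma~\ref{lemmaLowerboundlamda} we need $5\le 2^\lambda+\lambda$, giving $\lambda\ge 2$, and then one checks directly that every $2$-subset leaves two undominated or non-separated vertices, forcing $\lambda=2n-2=2$ only if... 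Actually the cleanest route is to verify by hand that $\lambda(F_{K_2}^f)=2$ by producing the set and checking the (very few) outside vertices; this is the $2n-2$ value.

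For the main case $n\ge 3$, I would argue as follows. \emph{Upper bound.} Since $f$ is constant, $f(A_1)=I=\{v_1\}$ for some $v_1\in A_2$, so $v_1$ is adjacent to every vertex of $A_1$ and to every vertex of $A_2$ (since $G_2=K_n$), while $A_1$ itself induces $K_n$ and $A_2\setminus\{v_1\}$ together with $v_1$ induces $K_n$. The set $\Phi=A_2\setminus\{v_1\}$ is a twin-set of $F_G^f$ of cardinality $n-1$ (all its vertices have the same closed neighborhood, namely $A_2$), and $A_1$ is also a twin-set of cardinality $n$ (all vertices of $A_1$ have closed neighborhood $A_1\cup\{v_1\}$). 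I would exhibit a locating-dominating set of size $2n-3$: take $n-1$ vertices of $A_1$ and $n-2$ vertices of $\Phi$ (leaving out, say, $u_n$ and two vertices $v_{n-1},v_n$ of $\Phi$, but adjusting so $v_1$ is handled correctly) and verify that the remaining three outside vertices receive distinct nonempty neighborhood-traces. Concretely I expect the set $\{u_1,\dots,u_{n-1}\}\cup\{v_2,\dots,v_{n-2}\}$ (or a similar choice) works, because $u_n$ sees all of $\{u_1,\dots,u_{n-1}\}$, each leftover $v_i\in\Phi$ sees $\{v_2,\dots,v_{n-2}\}$ plus possibly nothing else distinguishing them — so I must be careful to also retain $v_1$ or one extra vertex; getting the count exactly $2n-3$ is the delicate bookkeeping here.

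\emph{Lower bound.} This is where the Twin Proposition does the heavy lifting. By Proposition~\ref{Prop2}, any locating-dominating set $L_D$ must contain at least $n-1$ vertices of the twin-set $A_1$ and at least $n-2$ vertices of the twin-set $\Phi$, giving $|L_D|\ge(n-1)+(n-2)=2n-3$ immediately — provided these two bounds can be met simultaneously, i.e. provided the one vertex of $A_1$ outside $L_D$ and the one vertex of $\Phi$ outside $L_D$ are genuinely located and dominated, and provided $v_1$ (which lies in neither twin-set) need not be added. So the real work of the lower bound is to rule out that $L_D$ can have only $n-1$ from $A_1$, only $n-2$ from $\Phi$, \emph{and} miss $v_1$ while still being locating-dominating of total size $<2n-3$ — but since $(n-1)+(n-2)=2n-3$ already, any valid $L_D$ has size $\ge 2n-3$ outright, so the bound is automatic and no case analysis is needed beyond invoking Proposition~\ref{Prop2} twice on the two disjoint twin-sets. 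The main obstacle is therefore not the lower bound but pinning down the exact upper-bound construction so that the leftover vertices $u_n$, the leftover vertex of $\Phi$, and $v_1$ all have pairwise-distinct nonempty traces in $L_D$ — this forces the precise choice of which $2n-3$ vertices to take, and I would present that explicit set and check the (at most three) outside vertices case by case.
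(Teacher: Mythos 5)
Your overall strategy coincides with the paper's: identify the two disjoint twin-sets $A_1$ (all of whose vertices have closed neighborhood $A_1\cup\{v_1\}$) and $\Phi=A_2\setminus\{v_1\}$, apply Proposition~\ref{Prop2} to each to get $|L_D|\ge (n-1)+(n-2)=2n-3$, and exhibit a set of size $2n-3$ for the upper bound. The lower bound as you give it is complete and is exactly the paper's argument; your worry about whether the two bounds "can be met simultaneously" is unnecessary, since the two twin-sets are disjoint and the inequality $|L_D|\ge 2n-3$ follows outright.

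The gap is in the upper bound, which you leave as "delicate bookkeeping": the concrete set you tentatively name, $\{u_1,\dots,u_{n-1}\}\cup\{v_2,\dots,v_{n-2}\}$, has cardinality $2n-4$, not $2n-3$, and it fails, because it leaves the two twins $v_{n-1}$ and $v_n$ of $\Phi$ outside with identical traces $\{v_2,\dots,v_{n-2}\}$. The correct choice (the paper's) is to omit exactly one vertex from each twin-set together with $v_1$, i.e.\ $L=\{u_1,\dots,u_{n-1}\}\cup\{v_2,\dots,v_{n-1}\}$. Then the three outside vertices are separated: $N(u_n)\cap L=\{u_1,\dots,u_{n-1}\}$, $N(v_1)\cap L=L$ (since $v_1$ is adjacent to all of $A_1$ and all of $\Phi$), and $N(v_n)\cap L=\{v_2,\dots,v_{n-1}\}$, all nonempty and pairwise distinct. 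Your $n=2$ paragraph also meanders through a false start (the claim $\lambda=3$) before settling on the right answer; the clean statement is that $F_{K_2}^f$ is a triangle with a pendant, for which Lemma~\ref{lemmaLowerboundlamda} gives $\lambda\ge 2$ and, e.g., $\{u_1,v_1\}$ achieves it.
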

\begin{proof}
For $n=2$, $F_G^f$ is the complete graph $K_3$ with a pendant
attached with any one of the vertices of $K_3$. Clearly,
$\lambda(F_G^f)=2n-2$. For $n\ge 3$, $F_G^f$ with constant $f$ has
$I=\{v_1\}$ and using the labeling as defined earlier
$\Psi_{v_1}=\{u_1,...,u_n\}$ and $\Phi=\{v_2,...,v_n\}$. Let $L_D^*$
be a locating-dominating set of $F_G^f$ with the minimum
cardinality. By Proposition \ref{Prop2}, $|L_D^*\cap \Psi_{v_1}|\ge
n-1$ and $|L_D^*\cap \Phi|\ge n-2$. Thus, $\lambda(F_G^f)\ge 2n-3$.
Moreover, $A_1\setminus\{u_n\}\cup A_2\setminus\{v_1,v_n\}$ is
locating-dominating set of $F_G^f$, and hence $\lambda(F_G^f) \le
2n-3$ and the result follows.
\end{proof}
\begin{Theorem}\label{f12}
 Let $G$ be the complete graph of order $n\geq
4$ and $F_G^f$ does not has functi matchings. If $1<k<n$, then
$\lambda(F_G^f)=2n-k-2$
\end{Theorem}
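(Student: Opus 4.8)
The plan is to prove the two inequalities $\lambda(F_G^f)\le 2n-k-2$ and $\lambda(F_G^f)\ge 2n-k-2$ separately, using the labelling fixed just above the statement: $A_1=\bigcup_{i=1}^{k}\Psi_{v_i}$ with $|\Psi_{v_i}|=s_i$, $I=\{v_1,\dots,v_k\}\subseteq A_2$, and $\Phi=A_2\setminus I$ with $|\Phi|=n-k$. Since $F_G^f$ has no functi matching, $s_i\ge 2$ for every $i$, so $n=\sum_{i=1}^{k}s_i\ge 2k$; in particular $|\Phi|=n-k\ge k\ge 2$, which is what allows Proposition \ref{Prop2} to be applied both to $\Phi$ and to each twin set $\Psi_{v_i}$.

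For the upper bound I would exhibit an explicit locating-dominating set. Pick a representative $u_{(i)}\in\Psi_{v_i}$ for each $i$ and a vertex $w\in\Phi$, and set $L_D=(A_1\setminus\{u_{(1)},\dots,u_{(k)}\})\cup(\Phi\setminus\{w\})\cup(I\setminus\{v_1\})$, which has cardinality $(n-k)+(n-k-1)+(k-1)=2n-k-2$. The vertices outside $L_D$ are exactly $u_{(1)},\dots,u_{(k)},w,v_1$, and one checks directly that their traces on $L_D$ are nonempty and pairwise distinct: each $u_{(i)}$ has $A_1$-trace $A_1\setminus\{u_{(1)},\dots,u_{(k)}\}$ (nonempty since $n-k\ge 2$) together with $A_2$-trace $\{v_i\}$ for $i\ge 2$ and $\emptyset$ for $i=1$, so the $u_{(i)}$ are pairwise separated; $w$ has empty $A_1$-trace, which separates it from every $u_{(i)}$ and from $v_1$; and $v_1$ has $A_1$-trace $\Psi_{v_1}\setminus\{u_{(1)}\}$, a nonempty subset of $\Psi_{v_1}$, whereas the $A_1$-trace of each $u_{(i)}$ meets every fibre, so $v_1$ is separated from all the $u_{(i)}$. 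This verification is routine once $L_D$ is written down.

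For the lower bound, let $L_D^{\ast}$ be a minimum locating-dominating set and count the vertices outside it. By Proposition \ref{Prop2}, at most one vertex of $\Phi$ lies outside $L_D^{\ast}$ and at most one vertex of each $\Psi_{v_i}$ lies outside $L_D^{\ast}$; let $O_1$ denote the set of $A_1$-vertices outside $L_D^{\ast}$ and put $t=|O_1|\le k$. The key step, which I expect to be the main obstacle, is the claim that at most one vertex of $O_1$ has its $f$-image outside $L_D^{\ast}$: for $u\in O_1$ we have $N(u)\cap L_D^{\ast}=(A_1\cap L_D^{\ast})\cup(\{f(u)\}\cap L_D^{\ast})$, since deleting $u\notin L_D^{\ast}$ from $A_1\setminus\{u\}$ does not change the intersection with $L_D^{\ast}$; hence if two distinct $u,u'\in O_1$ both satisfied $f(u),f(u')\notin L_D^{\ast}$, their traces would both equal $A_1\cap L_D^{\ast}$, contradicting that $L_D^{\ast}$ locates $u$ and $u'$. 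Consequently the $t$ images of $O_1$ contribute at most one vertex of $I$ outside $L_D^{\ast}$, while the remaining $k-t$ vertices of $I$ contribute at most $k-t$, so at most $(k-t)+1$ vertices of $I$ lie outside $L_D^{\ast}$. Adding up, the number of vertices outside $L_D^{\ast}$ is at most $t+1+\bigl((k-t)+1\bigr)=k+2$, whence $\lambda(F_G^f)=|L_D^{\ast}|\ge 2n-(k+2)=2n-k-2$. Combining this with the upper bound gives $\lambda(F_G^f)=2n-k-2$; apart from the "at most one image outside'' claim and the bookkeeping that matches it against the fibre and $\Phi$ twin constraints, every step is a direct check.
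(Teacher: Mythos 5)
Your proof is correct and follows essentially the same route as the paper: Proposition \ref{Prop2} applied to the twin sets $\Psi_{v_i}$ and $\Phi$, together with the key observation that two fibre representatives outside $L_D^{\ast}$ whose images are also outside would have identical traces $A_1\cap L_D^{\ast}$ --- exactly the contradiction the paper derives. The only differences are cosmetic: you use a different (equally valid) witness set of size $2n-k-2$ for the upper bound, and you organize the lower bound as a count of the at most $k+2$ vertices outside $L_D^{\ast}$ rather than accumulating forced vertices inside it, which incidentally avoids the paper's ``without loss of generality'' normalization of which fibre vertices are excluded.
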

\begin{proof}
For $1<k<n$, $I=\{v_1, v_2,...,v_{k}\}$. First we prove that the set
$L=\{\cup_{i=1}^{k} \Psi_{v_i}\setminus \{u_{s_1}\}\}\cup
\{\Phi\setminus\{v_n\}\}$ is a locating-dominating set of $F_G^f$
with the cardinality $|L|=\sum_{i=1}^{k}s_i-1+(n-k-1)=2n-k-2$. Since
$V(F_G^f)\setminus L=\{u_{s_1},v_1,...,v_{k},v_n\}$. We prove that
all the elements of $V(F_G^f)\setminus L$ have distinct non-empty
neighbors in $L$. Now, $N(u_{s_1})\cap L=\cup_{i=1}^{k}
\Psi_{v_i}\setminus \{u_{s_1}\}$, $N(v_1)\cap
L=\{\Psi_{v_1}\setminus \{u_{s_1}\}\}\cup\{\Phi\setminus\{v_n\}\}$,
for each $i$ where $2\le i \le k$, $N(v_i)\cap
L=\Psi_{v_i}\cup\{\Phi\setminus{v_n}\}$, $N(v_n)\cap L=
\Phi\setminus \{v_n\}$. Thus, $L$ is a locating-dominating set of
$F_G^f$. Hence, $\lambda(F_G^f)\le 2n-k-2$. Let $L_D^*$ be a
locating-dominating set of $F_G$ with the minimum cardinality. Then
 by Proposition \ref{Prop2}, $L_D^*$ must contains $s_i-1$ vertices
 of the disjoint twin sets
$\Psi_{v_i}$ for each $i$, $1< i \le k$ and $n-k-1$ vertices of the
twin set $\Phi$. Therefore, $\lambda(F_G^f)\ge 2n-2k-1$. Without
loss of generality, assume the set $L_D^*\cap
\Psi_{v_1}=\Psi_{v_1}\setminus \{u_{s_1}\}$, $L_D^*\cap
\Psi_{v_i}=\Psi_{v_i}\setminus \{u_{l_{s_i}}\}$ for each $i$ $(2\le
i \le k)$ and $L_D^*\cap \Phi=\Phi\setminus \{v_n\}$. Now consider
the two element sets $\{u_{s_1},v_1\}$ and $\{u_{l_{s_i}},v_i\}$
$(2\le i \le k)$. We claim that $L_D^*$ contains atleast one element
from exactly $k-1$ sets of these two element sets. Consider
$u_{s_1},v_1\not\in L_D^*$. Next we prove that one vertex from
$\{u_{l_{s_i}},v_i\}$ $(2\le i \le k)$ must belongs to $L_D^*$ for
all $i$ $(2\le i \le k)$. Suppose on contrary that both $v_i$ and
$u_{l_{s_i}}$ do not belong to $L_D^*$ for some $i$ ($2\le i \le
k$). Then $N(u_{l_{s_i}})\cap L_D^*= N(u_{s_1})\cap L_D^*$, a
contradiction. Similarly, by considering $u_{l_{s_i}},v_i\not\in
L_D^*$ for some $i$ $(2\le i \le k)$ and using similar arguments we
leads to a contradiction. Thus, $L_D^*$ must contains atleast one
element from exactly $k-1$ sets of these two element sets.
Consequently, $|L_D^*|\ge 2n-k-2$. Hence, $\lambda(F_G^f)=2n-k-2$.
\end{proof}
For the second part of discussion, let $F_G^f$ has atleast one
functi matching. In this case, we label the vertices of $I$ as:
$I=\{v_1, v_2,...,v_{k'},v_{k'+1},...,v_{k}\}$ where $(1\le k' < k)$
and the subscript index is assigned to each $v$ according to the
index of corresponding $s_i$ $(1\le i \le k)$, where $s_i$ are
assinged indices according as $s_1\ge s_2 \ge ...\ge s_{k'}>
s_{k'+1}=...=s_k=1$. Notations of $\Phi$ and $\Psi_{v_i}$ $(1\le i
\le k')$ are same as used earlier. The labeling of vertices of a
functigraph of the complete graph $K_9$ for $k=6$ and $k'=2$ is
illustrated in the Figure \ref{Fig2}$(b)$ where the functigraph has
four functi matchings. It can be seen that for each $i$ $(k'+1\le
i\le k)$, $s_i=1$ and $\Psi_{v_i}=\{u_{{l_{s_{k'}}+i}}\}$ and the
edge $u_{{l_{s_{k'}}+i}}v_i\in E(F_G^f)$ is a functi matching of
$F_G^f$.
\begin{Lemma}\label{LemmaCompleteBijective} Let $G=K_{n}$ be the complete
 graph of order $n\geq 2$ and $f:A_1\rightarrow A_2$ be a bijective function, then
$$\lambda(F_G^f)=\left\{
\begin{array}{ll}
n,\,\,\,\,\,\,\,\,\,\,\,\,\,\,\,\,\,\,\,if \,\,\,\, n=2,3 &  \\
n-1,\,\,\,\,\,\,\,\,\,if \,\,\,\, n\ge 4 \\
\end{array}
\right.$$
\end{Lemma}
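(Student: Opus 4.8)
The plan is to dispose of the small orders $n=2,3$ by elementary observations and then to prove two matching estimates $n-1\le\lambda(F_G^f)\le n-1$ when $n\ge 4$. First I would unwind the structure: since $f$ is bijective we have $I=f(A_1)=A_2$, so $k=n$, $\Phi=A_2\setminus I=\emptyset$, and every edge between the two copies is a functi matching; because every permutation of $V(K_n)$ is a graph automorphism, I may relabel the vertices so that the matching edges are exactly $u_iv_i$ for $1\le i\le n$. Thus $F_G^f$ is two disjoint copies of $K_n$ joined by the perfect matching $\{u_iv_i:1\le i\le n\}$, and in particular $N(u_i)=(A_1\setminus\{u_i\})\cup\{v_i\}$ and $N(v_j)=(A_2\setminus\{v_j\})\cup\{u_j\}$.

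For $n\in\{2,3\}$ I would first note that $A_1$ is a locating-dominating set: for every $v_j\in A_2$ one has $N(v_j)\cap A_1=\{u_j\}$, and these singletons are nonempty and pairwise distinct, so $\lambda(F_G^f)\le n$. For the reverse inequality it suffices to invoke Lemma \ref{lemmaLowerboundlamda}, which gives $2n+1\le 2^{\lambda}+\lambda$; for $n=2$ this forces $\lambda\ge 2$, and for $n=3$ it forces $\lambda\ge 3$. This proves the first line of the formula.

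Now let $n\ge 4$. For the upper bound I would exhibit the explicit set $L=\{u_1,\dots,u_{n-2}\}\cup\{v_{n-1}\}$, of cardinality $n-1$, and verify it is locating-dominating by computing the traces on $L$ of the vertices outside it, namely $V(F_G^f)\setminus L=\{u_{n-1},u_n,v_1,\dots,v_{n-2},v_n\}$: one gets $N(u_{n-1})\cap L=L$, $N(u_n)\cap L=\{u_1,\dots,u_{n-2}\}$, $N(v_i)\cap L=\{u_i,v_{n-1}\}$ for $1\le i\le n-2$, and $N(v_n)\cap L=\{v_{n-1}\}$. These are nonempty and pairwise distinct (the hypothesis $n\ge 4$ being used only to separate the $(n-1)$-element trace $L$ and the $(n-2)$-element trace $\{u_1,\dots,u_{n-2}\}$ from the two-element ones), so $\lambda(F_G^f)\le n-1$.

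The crux is the matching lower bound $\lambda(F_G^f)\ge n-1$, where Lemma \ref{lemmaLowerboundlamda} is already too weak for $n\ge 5$. Let $L$ be any locating-dominating set and set $L_1=L\cap A_1$, $L_2=L\cap A_2$. For $u_i\in A_1\setminus L_1$ one has $N(u_i)\cap L=L_1\cup(L_2\cap\{v_i\})$; hence for two distinct vertices $u_i,u_j\in A_1\setminus L_1$ the traces $N(u_i)\cap L$ and $N(u_j)\cap L$ can coincide only when both $v_i\notin L_2$ and $v_j\notin L_2$, since otherwise the traces are $L_1\cup\{v_i\}$ and $L_1\cup\{v_j\}$ (distinct as $v_i\ne v_j$ and $v_i,v_j\notin L_1$), or $L_1\cup\{v_i\}$ and $L_1$ (again distinct). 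Consequently at most one vertex of $A_1\setminus L_1$ has its matching partner outside $L_2$, so at least $|A_1\setminus L_1|-1=(n-|L_1|)-1$ of them have their partner in $L_2$; as distinct $u_i$ have distinct partners $v_i$, this yields $(n-|L_1|)-1\le|L_2|$, i.e. $|L|=|L_1|+|L_2|\ge n-1$. The only delicate point in the whole argument is this trace computation; everything else is routine bookkeeping, and the symmetric reasoning on the $A_2$-side gives the identical inequality, so no further case analysis is needed. Combining the bounds gives $\lambda(F_G^f)=n-1$ for $n\ge 4$, which completes the proof.
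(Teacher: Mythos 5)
Your proof is correct and follows essentially the same route as the paper: the same small-case treatment for $n=2,3$ (a witness set $A_1$ plus the lower bound $2n+1\le 2^{\lambda}+\lambda$), an explicit $(n-1)$-element witness set for the upper bound, and a lower bound resting on the observation that vertices of $A_1\setminus L$ all see $L\cap A_1$ and are separated only by their matching partners, so at most one such partner may lie outside $L\cap A_2$. Your direct counting version of that last step is in fact a cleaner and more rigorous rendering of the paper's contradiction argument.
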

\begin{proof}
For $n=2$, $F_G^f$ is a cyclic graph of order 4 and hence
$\lambda(F_G^f)=2$ by \cite{cac2}. For $n=3$, $F_G^f$ is a
triangular prism of order 6 and $\lambda(F_G^f)\ge 3$ by Lemma
\ref{lemmaLowerboundlamda}, whereas $A_1$ is a locating-dominating
set of $F_G^f$ with the cardinality 3. For $n\ge 4$, we have the
labeling as defined earlier $I=A_2=\{v_1,...,v_n\}$ and
$\Psi_{v_i}=\{u_i\}$ for all $i$ $(1\le i \le n)$ and hence
$A_1=\{u_1,...,u_n\}$. Consider the set $L=\{u_1,...,u_{n-2},v_n\}$.
We prove that $L$ forms a locating-dominating set of $F_G^f$. As
$N(u_{n-1})\cap L=\{u_1,...,u_{n-2}\}$, $N(u_{n})\cap
L=\{u_1,...,u_{n-2},v_n\}$. Also, $N(v_i)\cap L=\{u_i,v_n\}$ for all
$1\le i \le n-2$ and $N(v_{n-1})\cap L=\{v_n\}$. Thus, $L$ forms a
locating-dominating set of $F_G^f$ and $\lambda(F_G^f)\le n-1$. Next
we prove that $\lambda(F_G^f)\ge n-1$. Suppose on contrary there
exist a locating-dominating set $L_D$ of cardinality $n-2$, then
either $|L_D\cap A_1|\le n-2$ or $|L_D\cap A_2|\le n-2$. Assume
$|L_D\cap A_1|= n-2-j$ and $|L_D\cap A_2|=j$ where $0\le j\le n-2$.
Since $u_iv_i$ forms functi matching of $F_G^f$ for all $i$ $(1\le i
\le n)$. Therefore without loss of generality assume that $L_D\cap
A_1=\{u_1,u_2,...,u_{n-2-j}\}$. If $L_D$ is a locating-dominating
set of $F_G^f$, then the vertices of $A_1\setminus
L_D=\{u_{n-1-j},u_{n-j},...,u_{n}\}$ must have distinct neighbors in
$L_D$ which is possible only $|L_D\cap A_2|= j+2$, a contradiction.
Thus, $\lambda(F_G^f)\ge n-1$ and the result follows.
\end{proof}

%
%
%
%
%
%
%

\begin{Theorem}\label{f13}
Let $G$ be the complete graph of order $n\geq 3$ and $F_G^f$ has
atleast one functi matching. If $1<k<n$, then

$$\lambda(F_G^f)=\left\{
\begin{array}{ll}
2n-k-1,\,\,\,\,\,\,\,\,\,\,\,\,\,\,\,\,\,\,\,if \,\,\,\, n=3, \,\,\,\, k=2, &  \\
2n-k-2,\,\,\,\,\,\,\,\,\,\,\,\,\,\,\,\,\,\,\,if \,\,\,\, n\ge 4, \,\,\,\, k\ge 2.&  \\
\end{array}
\right.$$
\end{Theorem}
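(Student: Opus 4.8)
The plan is to separate the degenerate case $n=3,\ k=2$ from the generic case $n\ge 4$. First note that the hypotheses $1<k<n$ together with ``$F_G^f$ has at least one functi matching'' force $k'\ge 1$: if every $s_i=1$ then $k=\sum_i s_i=n$, a contradiction. So there is a genuine twin-class $\Psi_{v_1}$ with $s_1\ge 2$, and when $k=n-1$ a counting argument on $\sum_i s_i=n$ even gives $k'=1$, $s_1=2$, $\Psi_{v_1}=\{u_1,u_2\}$. For $n=3,\ k=2$ the structure is pinned down completely ($\Psi_{v_1}=\{u_1,u_2\}$, $\Psi_{v_2}=\{u_3\}$, $\Phi=\{v_3\}$, with the extra edges $u_1v_1,u_2v_1,u_3v_2$), and one checks by hand that $\{u_1,u_3,v_3\}$ is a locating-dominating set; combined with the general lower bound $\lambda(F_G^f)\ge 3$ of Theorem~\ref{Bounds on lambda} this yields $\lambda(F_G^f)=3=2n-k-1$.

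For the upper bound when $n\ge 4$ I would split off the case $k=n-1$. If $k\le n-2$, so $|\Phi|=n-k\ge 2$, take $L=(A_1\setminus\{u_{s_1}\})\cup(\Phi\setminus\{v_n\})$, of cardinality $(n-1)+(n-k-1)=2n-k-2$; its complement is $\{u_{s_1}\}\cup I\cup\{v_n\}$, and a short computation gives $N(u_{s_1})\cap L=A_1\setminus\{u_{s_1}\}$, $N(v_1)\cap L=(\Psi_{v_1}\setminus\{u_{s_1}\})\cup(\Phi\setminus\{v_n\})$, $N(v_i)\cap L=\Psi_{v_i}\cup(\Phi\setminus\{v_n\})$ for $2\le i\le k$, and $N(v_n)\cap L=\Phi\setminus\{v_n\}$. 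These are non-empty (since $s_1\ge 2$ and $\Phi\setminus\{v_n\}\neq\emptyset$) and pairwise distinct: the trace of $u_{s_1}$ is the only one disjoint from $\Phi$, and the $A_1$-parts of the remaining traces are the pairwise disjoint sets $\Psi_{v_1}\setminus\{u_{s_1}\},\ \Psi_{v_2},\ \dots,\ \Psi_{v_k},\ \emptyset$. When $k=n-1$ the set $\Phi\setminus\{v_n\}$ is empty, so instead take $L=\{u_1\}\cup(I\setminus\{v_1\})$, of cardinality $1+(n-2)=2n-k-2$, and verify directly that $N(u_2)\cap L=\{u_1\}$, $N(u_j)\cap L=\{u_1,f(u_j)\}$ for the functi-matched $u_j$, $N(v_1)\cap L=\{u_1\}\cup(I\setminus\{v_1\})$, and $N(v_n)\cap L=I\setminus\{v_1\}$ are non-empty and pairwise distinct.

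The main work is the lower bound for $n\ge 4$. Let $L_D^*$ be a minimum locating-dominating set. Since $A_1$ and $A_2$ each induce $K_n$, every $u\in A_1\setminus L_D^*$ has $N(u)\cap L_D^*=(A_1\cap L_D^*)\cup(\{f(u)\}\cap L_D^*)$; comparing two such vertices forces (i) $f$ to be injective on $A_1\setminus L_D^*$, equivalently $|L_D^*\cap\Psi_{v_i}|\ge s_i-1$ for every $i$ (this also re-derives Proposition~\ref{Prop2} for the $A_1$-twin-classes), and (ii) at most one $u\in A_1\setminus L_D^*$ can have $f(u)\notin L_D^*$. Now let $q$ be the number of indices $i$ with $\Psi_{v_i}\not\subseteq L_D^*$; by (i), $\sum_{i=1}^{k}|L_D^*\cap\Psi_{v_i}|=\sum_i s_i-q=n-q$. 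For each such $i$ choose $u\in\Psi_{v_i}\setminus L_D^*$; since $f(u)=v_i$, property (ii) forces $v_i\in L_D^*$ for all but at most one of these $q$ indices, hence $|L_D^*\cap I|\ge q-1$. Finally $\Phi$ is a twin-set, so $|L_D^*\cap\Phi|\ge n-k-1$ by Proposition~\ref{Prop2} (trivial when $|\Phi|=1$). Summing, $|L_D^*|=\sum_i|L_D^*\cap\Psi_{v_i}|+|L_D^*\cap\Phi|+|L_D^*\cap I|\ge(n-q)+(n-k-1)+\max(0,q-1)\ge 2n-k-2$, which matches the construction and gives $\lambda(F_G^f)=2n-k-2$.

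I expect the delicate points to be: (a) making the step (ii) airtight, including the harmless edge case $A_1\cap L_D^*=\emptyset$ (which is actually impossible here, since it would force $f$ injective and hence $k=n$); (b) noticing that the clean construction $(A_1\setminus\{u_{s_1}\})\cup(\Phi\setminus\{v_n\})$ degenerates precisely when $k=n-1$, so that the alternative set $\{u_1\}\cup(I\setminus\{v_1\})$ must be treated separately; and (c) the bookkeeping of pairwise distinctness among the $\Theta(k)$ traces in each construction, which is routine once one separates traces by their $A_1$-part and by whether they meet $\Phi$.
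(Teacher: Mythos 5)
Your proposal is correct, and while the overall skeleton (hand-check $n=3$, explicit set for the upper bound, counting for the lower bound) matches the paper, the execution of the lower bound is genuinely different and, in my view, cleaner. The paper proves $\lambda(F_G^f)\ge 2n-k-2$ by applying Proposition~\ref{Prop2} to the twin classes $\Psi_{v_i}$ $(i\le k')$ and $\Phi$, and then establishing two separate claims: one extracting $k'-1$ extra vertices from the pairs $\{u_{s_1},v_1\},\{u_{l_{s_i}},v_i\}$ $(2\le i\le k')$, and one extracting $k-k'$ vertices from the endpoints of the functi matchings; the total is assembled from these disjoint contributions. You instead observe that every $u\in A_1\setminus L_D^*$ has trace $(A_1\cap L_D^*)\cup(\{f(u)\}\cap L_D^*)$, deduce that $f$ is injective on $A_1\setminus L_D^*$ and that at most one such $u$ has $f(u)\notin L_D^*$, and then run a single count with the parameter $q$; this treats the $s_i\ge 2$ classes and the functi-matched singletons uniformly, never invokes the functi-matching hypothesis, and therefore simultaneously re-proves the lower bound of Theorem~\ref{f12}. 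The trade-off appears on the upper-bound side: the paper's set includes $v_k$, which dominates $v_n$ and so works uniformly for all $2\le k\le n-1$, whereas your set $(A_1\setminus\{u_{s_1}\})\cup(\Phi\setminus\{v_n\})$ degenerates at $k=n-1$ and you correctly supply the alternative $\{u_1\}\cup(I\setminus\{v_1\})$ there; both constructions check out. Your edge-case remarks (the impossibility of $A_1\cap L_D^*=\emptyset$, the $q=0$ branch absorbed by $\max(0,q-1)$) close the only places where the counting could slip, so I see no gap.
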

\begin{proof}(i) For $n=3$ and $k=2$, let $f:A_1\rightarrow A_2$ be defined as
$f(u_i)=v_1$, where $i=1,2$ and $f(u_3)=v_2$. By Lemma
\ref{lemmaLowerboundlamda}, $\lambda(F_G^f)> 2$. Moreover,
$\{u_1,u_3,v_2\}$ forms a locating-dominating set of $F_G^f$. Thus,
$\lambda(F_G^f)= 3$.

\noindent(ii) For $n\ge 4$ and $2 \le k \le n-1$. Since $F_G^f$ has
functi matchings, therefore there exists a $k'$ $(1\le k' < k)$ such
that $s_{i}=1$ for all $i$ $(k'+1\le i\le k)$. Thus we use the
labeling as described earlier for the vertices of $A_1$ and $A_2$.
Let $L_D^*$ be a locating-dominating set of $F_G^*$ with the minimum
cardinality. The proof consists of the following claims:
\begin{enumerate}
\item\textbf{Claim.} The set $L=\{\cup_{i=1}^{k'}
\Psi_{v_i}\setminus
\{u_{s_1}\}\}\cup\{u_{l_{s_{k'}}+1},u_{l_{s_{k'}}+2},...,u_{l_{s_{k'}}+k-1},v_k\}\cup
\{\Phi\setminus\{v_n\}\}$ is a locating-dominating set of the
cardinality
$|L|=\sum_{i=1}^{k'}s_i-1+(n-\sum_{i=1}^{k'}s_i)+(n-k-1)=2n-k-2$.

\noindent\emph{\textbf{Proof of claim}}: Since $V(F_G^f)\setminus
L=\{u_{s_1},u_{l_{s_{k'}}+k},v_1,...,v_{k'},v_{k'+1},...,v_{k-1},v_n\}$.
We prove that all the elements of $V(F_G^f)\setminus L$ have
distinct neighbors in $L$. Now, $N(u_{s_1})\cap L=A_1\setminus
\{u_{s_1},u_{l_{s_{k'}}+k}\}$, $N(u_{l_{s_{k'}}+k})\cap
L=A_1\setminus \{u_{s_1},u_{l_{s_{k'}}+k}\}\cup \{v_k\}$,
$N(v_1)\cap L=\{\Psi_{v_1}\setminus \{u_{s_1}\}\}\cup
\{v_k\}\cup\{\Phi\setminus\{v_n\}\}$, for each $i$ where $2\le i \le
k'$, $N(v_i)\cap L=\Psi_{v_i}\cup
\{v_k\}\cup\{\Phi\setminus\{v_n\}\}$, for each $i$ where $k'+1\le i
\le k-1$, $N(v_i)\cap L=\{u_{l_{s_{k'}}+i-k'}\}\cup
\{v_k\}\cup\{\Phi\setminus \{v_n\}\}$, $N(v_n)\cap L=
\{v_k\}\cup\{\Phi\setminus \{v_n\}\}$. Thus, $L$ is a
locating-dominating set of $F_G^f$ and hence, $\lambda (F_G^f)\le
2n-k-2$.

\item \textbf{Claim.} $\lambda(F_G^f)\ge n+\sum_{i=1}^{k'}s_i-k-2$.

\noindent\emph{\textbf{Proof of claim}}: By Proposition \ref{Prop2},
$L_D^*$ must contains $s_i-1$ vertices
 of the disjoint twin sets
$\Psi_{v_i}$ for each $i$, $1< i \le k'$ and $n-k-1$ vertices of the
twin set $\Phi$. Therefore, $\lambda(F_G^f)\ge
\sum_{i=1}^{k'}(s_i-1)+(n-k-1)=n+\sum_{i=1}^{k'}s_i-k'-k-1$. Without
loss of generality, assume the set $L_D^*\cap
\Psi_{v_1}=\Psi_{v_1}\setminus \{u_{s_1}\}$, $L_D^*\cap
\Psi_{v_i}=\Psi_{v_i}\setminus \{u_{l_{s_i}}\}$ for each $i$ $(2\le
i \le k')$ and $L_D^*\cap \Phi=\Phi\setminus \{v_n\}$. Now consider
the two element sets $\{u_{s_1},v_1\}$ and $\{u_{l_{s_i}},v_i\}$
$(2\le i \le k')$. We claim that $L_D^*$ must contains atleast one
element from exactly $k'-1$ sets of these two element sets. Consider
$\{u_{s_1},v_1\}\not\subset L_D^*$. We prove that one vertex from
$\{u_{l_{s_i}},v_i\}$ $(2\le i \le k')$ must belong to $L_D^*$ for
all $i$ $(2\le i \le k')$. Suppose on contrary that both
$u_{l_{s_i}}$ and $v_i$
 do not belong to $L_D^*$ for some $i$ ($2\le i \le
k'$). Then $N(u_{l_{s_i}})\cap L_D^*= N(u_{s_1})\cap L_D^*$, a
contradiction. Similarly, by considering
$\{u_{l_{s_i}},v_i\}\not\subset L_D^*$ for some $i$ $(2\le i \le
k')$ and using the similar arguments we lead to a contradiction.
Thus $L_D^*$ must contains atleast one element from exactly $k'-1$
sets of these two element sets. Consequently, $\lambda(F_G^f)\ge
n+\sum_{i=1}^{k'}s_i-k-2$.

\item \textbf{Claim.} $|L_D^*\cap \{u_{l_{s_{k'}}+1},u_{l_{s_{k'}}+2},...,
u_{k},v_{k'+1}, v_{k'+2},...,v_{k}\}|= n-\sum_{i=1}^{k'}s_i$.

\noindent\emph{\textbf{Proof of claim}}: As
$u_{l_{s_{k'}}+i}v_{k'+i}\in E(F^f_G)$ for each $i$ $(1\le i \le
k-k')$ form the functi matchings of $F^f_G$. We take the assumptions
that we have proved in Claim 2 that $L_D^*\cap
\Psi_{v_1}=\Psi_{v_1}\setminus \{u_{s_1}\}$, $L_D^*\cap
\Psi_{v_i}=\Psi_{v_i}$ for each $i$ $(2\le i \le k')$ and $L_D^*\cap
\Phi=\Phi\setminus \{v_n\}$. Now consider the two element sets
$\{u_{l_{s_{k'}}+i},v_{i}\}$ $(k'+1\le i \le k)$ as the sets of two
end vertices of the functi matchings of $F_G^f$. We prove that
$L_D^*$ must contains exactly one element from each of these $k-k'$
sets. Suppose on contrary $\{u_{l_{s_{k'}}+i},v_{i}\}\not\subset
L_D^*$ for some $i$ $(k'+1\le i \le k)$, then
$N(u_{l_{s_{k'}}+i})\cap L_D^*= N(u_{s_1})\cap L_D^*$, a
contradiction. Thus either $u_{l_{s_{k'}}+i}$ or $v_{i}$ for each
$i$ $(k'+1\le i \le k)$ (not both to maintain the minimality of
$L_D^*$) must belong to $L_D^*$ to make distinct neighbor of
$u_{l_{s_{k'}}+i}$ in $L_D^*$. Hence, $|L_D^*\cap
\{u_{l_{s_{k'}}+1},u_{l_{s_{k'}}+2},..., u_{k},v_{k'+1},
v_{k'+2},...,v_{k}\}|=k-k'= n-\sum_{i=1}^{k'}s_i$.

\end{enumerate}
Since the sets used to prove Claim 2 and Claim 3 are disjoint
subsets of $V(F_G^f)$, therefore combining Claim 2 and Claim 3 we
get $\lambda(F_G^f)\ge 2n-k-2$. Combining this with Claim 1 we get
the required result.
\end{proof}

\begin{Corollary}
Let $G$ be the complete graph of order $n\geq 4$ and let $F_G^f$
contains $p$ functi matchings, then $\lambda(F_G^f)\ge p$. The bound
is sharp.
\end{Corollary}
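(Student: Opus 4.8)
The plan is to read the bound off Theorem~\ref{f13} after counting how many functi matchings a functigraph of $K_n$ can carry. I would first settle the bookkeeping: a functi matching is by definition an edge $v\,f^{-1}(v)$ with $|\Psi_v|=1$, so $p$ is exactly the number of vertices of $I$ whose fibre under $f$ is a singleton, and distinct such vertices have distinct preimages, so the $p$ functi matchings are pairwise vertex-disjoint. In the labelling of the second part of this section $p=k-k'$. If $p=0$ there is nothing to prove, since $\lambda(F_G^f)\ge 3$ by Theorem~\ref{Bounds on lambda}; so assume $p\ge 1$. Then $k\ge 2$ (a constant $f$ would force $p=0$ as $n\ge 4$), and if moreover $k<n$ we have $1\le k'<k$, so Theorem~\ref{f13} gives $\lambda(F_G^f)=2n-k-2$.

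It then suffices to verify $2n-k-2\ge k-k'$. Here I would use the partition identity $n=\sum_{i=1}^{k'}s_i+(k-k')$, in which the $k-k'$ singleton fibres contribute $1$ each, together with $s_i\ge 2$ for $1\le i\le k'$; this yields $n-(k-k')=\sum_{i=1}^{k'}s_i\ge 2k'$, i.e. $n-k\ge k'$, and hence
\[
\lambda(F_G^f)-p=(2n-k-2)-(k-k')=2(n-k)-2+k'\ \ge\ 2k'-2+k'=3k'-2\ \ge\ 1
\]
because $k'\ge 1$; so $\lambda(F_G^f)\ge p$, in fact strictly. A self-contained alternative that also covers the ranges of Lemma~\ref{LemmaCompleteConstant} and Theorem~\ref{f12} uniformly: for any locating-dominating set $L$ and any two vertices $u\ne u'$ of $A_1\setminus L$ one has $N(u)\cap L=(A_1\cap L)\cup(\{f(u)\}\cap L)$, so $u$ and $u'$ are separated only through their $f$-images; this forces $|A_1\setminus L|\le 1+|A_2\cap L|$ and hence $\lambda(F_G^f)\ge n-1\ge p$ whenever $f$ is not a bijection.

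I do not expect a real obstacle: the only non-routine step is fixing $p=k-k'$ and deriving $n-k\ge k'$, after which the estimate is a one-line computation. For the sharpness claim I would take the functigraph of $K_n$ in which one vertex of $I$ has exactly two preimages and the other $n-2$ vertices of $I$ have one preimage each, i.e. $k=n-1$, $k'=1$ and $p=n-2$: Theorem~\ref{f13} then gives $\lambda(F_G^f)=2n-(n-1)-2=n-1=p+1$, the smallest value the displayed inequality permits, so the estimate is best possible up to an additive $1$; I would verify this example directly against the formula. I would also flag that for a bijective $f$ one has $p=n$ while $\lambda(F_G^f)=n-1$ by Lemma~\ref{LemmaCompleteBijective}, so the corollary is to be understood within the hypothesis $1<k<n$ of Theorem~\ref{f13}, under which the argument above is valid.
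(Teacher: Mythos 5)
Your argument is correct wherever the corollary is actually true, but it takes a genuinely different route from the paper's. The paper's proof is a one-line citation of Claim 3 in the proof of Theorem \ref{f13}: there it is shown directly that any locating-dominating set must contain at least one endpoint of each functi matching (otherwise the $A_1$-endpoint of that matching would have the same trace in $L_D^*$ as the uncovered vertex $u_{s_1}$ of a fibre of size $\ge 2$), and since the $p$ matchings are pairwise vertex-disjoint this yields $|L_D^*|\ge p$ without ever invoking the closed formula for $\lambda(F_G^f)$. You instead read the inequality off the value $\lambda(F_G^f)=2n-k-2$ together with the partition identity $n=\sum_{i=1}^{k'}s_i+(k-k')$; that is valid in the range $1<k<n$, and your self-contained counting argument ($|A_1\setminus L|\le 1+|A_2\cap L|$, hence $\lambda(F_G^f)\ge n-1\ge p$ for non-bijective $f$) is arguably cleaner and closer in spirit to the lower-bound argument of Lemma \ref{LemmaCompleteBijective}. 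More importantly, your computation exposes two genuine defects in the statement that the paper's one-line proof glosses over. First, for bijective $f$ one has $p=n$ but $\lambda(F_G^f)=n-1$, so the claimed bound fails there; the Claim-3 mechanism needs a vertex $u_{s_1}$ lying in a fibre of size at least $2$, which does not exist when $f$ is a bijection, and yet the paper cites precisely this bijective case as its sharpness example. Second, in the range where the bound does hold your estimate gives $\lambda(F_G^f)-p\ge 3k'-2\ge 1$, so equality is never attained and the bound is not sharp in the usual sense (your example with $k=n-1$, $k'=1$ realizes $\lambda=p+1$, the best possible). Your reading of the corollary as implicitly restricted to $1<k<n$, and your flag on the sharpness claim, are both justified; the only thing you do not reproduce is the paper's direct matching-endpoint argument, which is worth knowing since it is the intended proof.
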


\begin{proof}
The result follows from proof of Claim 3 of Theorem \ref{f13}.
Sharpness of the bound follows from Lemma
\ref{LemmaCompleteBijective} where $f$ is a bijective function.
\end{proof}

\begin{Corollary}
Let $G$ be the complete graph of order $n\geq 4$. Then
$\lambda(G)=\lambda(F_G^f)$ if and only if $k=n-1$.
\end{Corollary}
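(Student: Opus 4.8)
The plan is to prove the two directions of the equivalence separately, using the classification of $\lambda(F_G^f)$ already established in Section 3 for the complete graph $K_n$. Recall that for $K_n$ with $n\ge 4$ we have $\lambda(K_n)=n-1$ by Theorem \ref{ThmCompleteLDN}. The value $k=|I|=|f(A_1)|$ ranges over $\{1,2,\dots,n\}$, and Lemmas \ref{LemmaCompleteConstant}, \ref{LemmaCompleteBijective} together with Theorems \ref{f12} and \ref{f13} give $\lambda(F_G^f)$ in every case, so the corollary reduces to a bookkeeping comparison: determine for which $k$ the listed value of $\lambda(F_G^f)$ equals $n-1$.

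For the ``if'' direction, suppose $k=n-1$. Then exactly one fiber $\Psi_{v}$ has size $2$ and the remaining $n-2$ fibers are singletons, so $F_G^f$ has functi matchings and we are in the case $1<k<n$ of Theorem \ref{f13} with $n\ge 4$, $k\ge 2$; hence $\lambda(F_G^f)=2n-k-2=2n-(n-1)-2=n-1=\lambda(K_n)$. (When $n=4$ the bound $k=n-1=3$ indeed satisfies $k\ge 2$, so Theorem \ref{f13} applies.) For the ``only if'' direction, I would run through the remaining possibilities for $k$ and check that $\lambda(F_G^f)\ne n-1$ in each: if $k=1$ ($f$ constant), Lemma \ref{LemmaCompleteConstant} gives $\lambda(F_G^f)=2n-3>n-1$ since $n\ge 4$; if $k=n$ ($f$ bijective on $A_1$ onto $A_2$, i.e.\ all fibers singletons), Lemma \ref{LemmaCompleteBijective} gives $\lambda(F_G^f)=n-2<n-1$; and if $2\le k\le n-2$, then regardless of whether $F_G^f$ has a functi matching, Theorem \ref{f12} or Theorem \ref{f13} gives $\lambda(F_G^f)=2n-k-2\ge 2n-(n-2)-2=n>n-1$ (in the matching case with $n\ge4$ the formula is still $2n-k-2$, and the exceptional line $n=3,k=2$ does not arise here). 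Thus $\lambda(F_G^f)=n-1$ forces $k=n-1$.

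The only subtlety — and the step I would be most careful about — is the boundary behaviour of the case distinctions: one must confirm that for $2\le k\le n-2$ the relevant theorem genuinely yields $2n-k-2$ and not one of the small exceptional values (the $n=2,3$ rows of the Lemmas and the $n=3,k=2$ row of Theorem \ref{f13}), which is why the hypothesis $n\ge4$ is essential and must be invoked explicitly at each comparison. Once that is checked, the inequalities $2n-3>n-1$, $2n-k-2>n-1$ for $k\le n-2$, and $n-2<n-1$ are immediate from $n\ge4$, and the proof is complete. I would write this up as a short case analysis on the value of $k$, pointing to the appropriate result of Section 3 in each case and closing with the equality $\lambda(F_G^f)=n-1\iff k=n-1$.
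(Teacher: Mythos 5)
Your overall strategy---a case analysis on $k$ that compares each value of $\lambda(F_G^f)$ established in Section 3 against $\lambda(K_n)=n-1$---is more explicit than the paper's one-line proof, which simply cites Theorem \ref{f13} and hence tacitly restricts attention to $1<k<n$. Your cases $k=1$ and $2\le k\le n-2$ are handled correctly: $2n-3>n-1$ and $2n-k-2\ge 2n-(n-2)-2=n>n-1$ for $n\ge 4$, and the $k=n-1$ computation $2n-(n-1)-2=n-1$ is right.

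However, there is a genuine error in your $k=n$ case. You assert that Lemma \ref{LemmaCompleteBijective} gives $\lambda(F_G^f)=n-2$ for bijective $f$ and $n\ge 4$; the lemma actually gives $\lambda(F_G^f)=n-1$. Since $\lambda(K_n)=n-1$ as well, a correct reading of that lemma shows that $\lambda(G)=\lambda(F_G^f)$ \emph{also} holds when $k=n$, so the ``only if'' direction fails at $k=n$ and the equivalence as literally stated (over all $k\in\{1,\dots,n\}$) is false. Your misquotation happens to ``rescue'' the claimed result rather than expose this. The repair is to restrict the corollary to $1<k<n$ (which is the range covered by Theorems \ref{f12} and \ref{f13}, and is evidently what the paper's own proof intends), in which case your remaining case analysis goes through; alternatively, the conclusion should read $k\in\{n-1,n\}$. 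Either way, the $k=n$ step as you wrote it would not survive checking against Lemma \ref{LemmaCompleteBijective}.
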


\begin{proof}
The result follows by Theorem \ref{f13} for $n\ge 4$.
\end{proof}
\section{Location-Domination Number of Functigraph of a family of
spanning subgraphs of the Complete Graphs} A vertex $u\in V(G)$ is
called a \emph{saturated} vertex, if $deg(u)=|V(G)|-1$. Since any
two saturated vertices are adjacent twins, therefore the set of all
saturated vertices of a graph forms a twin set represented by $T^s$.
Let $e'\in E(G)$ be an edge that joins two saturated vertices of
$G$, then the spanning subgraph of $G$ which is obtained by removing
the edge $e'$ is denoted by $G - e'$. Similarly, $H_i=G-ie'$ $(1
\leq i \leq \lfloor \frac{n}{2} \rfloor)$ denotes a spanning
subgraph of $G$ that is obtained by removing $i$ edges $e'$, where
$e'$ joins two saturated vertices of $G$. It may also be noted that
after removing the edge $e'$, the two saturated vertices that are
connected by $e'$, are converted to non-adjacent twins and hence
forms a twin set of cardinality 2. The twin set obtained after
removing the $i$th edge $e'$ is denoted by $T^t_i$, $(1 \leq i \leq
\lfloor \frac{n}{2} \rfloor)$. Thus, $H_i$ $(1 \leq i \leq \lfloor
\frac{n}{2} \rfloor)$ has $i$ twin sets $T^t_i$ of non-adjacent
twins, each of cardinality 2 and one twin set $T^s$ of adjacent
twins of the remaining $n-2i$ saturated vertices. Further, if $n$ is
even and $i=\frac{n}{2}$, then $T^s=\emptyset$. We label the
vertices in $H_i$ as follows: $T^t_i=\{u_i^1,u_i^2\}$ $(1 \leq i
\leq \lfloor \frac{n}{2} \rfloor)$ and
$T^s=\{u_{2i+1},u_{2i+2},...,u_n\}$ $(1\le i < \frac{n}{2})$. The
following theorem gives location-domination number of functigraph of
$G-ie'$.

\begin{Lemma}
Let $G$ be the complete graph of order $n=4$ and $f$ be a constant
function, then $\lambda(F_{H_i}^f)=4$ $(1\le i \le 2)$.
\end{Lemma}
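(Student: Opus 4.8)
The plan is to handle $i=1$ and $i=2$ separately, after two reductions. First, $H_1=K_4-e'$ is the diamond (two vertices $u_1^1,u_1^2$ of degree $2$ forming $T^t_1$, and two saturated vertices $u_3,u_4$ forming $T^s$), while $H_2=K_4-2e'$ is the $4$-cycle $u_1^1u_2^1u_1^2u_2^2u_1^1$ (with $T^t_1=\{u_1^1,u_1^2\}$, $T^t_2=\{u_2^1,u_2^2\}$, $T^s=\emptyset$). Second, since $f$ is constant it maps $A_1$ onto a single vertex $v^\ast\in A_2$, and for $\varphi\in\mathrm{Aut}(G_2)$ the identity on $A_1$ together with $\varphi$ on $A_2$ is an isomorphism $F_{H_i}^{f}\to F_{H_i}^{\varphi\circ f}$; hence $F_{H_i}^f$ is determined, up to isomorphism, by the $\mathrm{Aut}(H_i)$-orbit of $v^\ast$. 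For $H_2$ the cycle is vertex-transitive, so we may take $v^\ast=v_1^1$; for $H_1$ there are two orbits, and I would take $v^\ast=v_3$ (a saturated vertex of $G_2$) or $v^\ast=v_1^1$ (a degree-$2$ vertex of $G_2$). Thus there are three configurations to analyse.

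For the lower bound I would first list, in each configuration, three pairwise disjoint twin sets of cardinality $2$ in $F_{H_i}^f$. For $H_2$ these are $\{u_1^1,u_1^2\}$, $\{u_2^1,u_2^2\}$, $\{v_2^1,v_2^2\}$; for $H_1$ with $v^\ast=v_3$ they are $\{u_1^1,u_1^2\}$, $\{u_3,u_4\}$, $\{v_1^1,v_1^2\}$; for $H_1$ with $v^\ast=v_1^1$ they are $\{u_1^1,u_1^2\}$, $\{u_3,u_4\}$, $\{v_3,v_4\}$ — the two $H_1$ cases differ only in which pair inside $G_2$ survives as a twin set, because the functi edges touch $G_2$ only at $v^\ast$. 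By Proposition \ref{Prop2} every locating-dominating set meets each of these three sets, so $\lambda(F_{H_i}^f)\ge 3$. If $\lambda(F_{H_i}^f)=3$, then a minimum locating-dominating set $L_D^\ast$ must consist of exactly one vertex from each of the three twin sets and of nothing else; in particular the two vertices of $V(F_{H_i}^f)$ lying outside those three sets (namely $v_3,v_4$ when the $G_2$-twin set is $\{v_1^1,v_1^2\}$, and $v_1^1,v_1^2$ otherwise) both lie outside $L_D^\ast$. A short inspection of the adjacencies then shows that $L_D^\ast$ either leaves the omitted member of the $G_2$-twin set undominated (configurations $H_2$ and $H_1$ with $v^\ast=v_3$) or gives two vertices of the complement the same trace $N(\cdot)\cap L_D^\ast$ (configuration $H_1$ with $v^\ast=v_1^1$, where both $v_1^2$ and the omitted vertex of $\{v_3,v_4\}$ get a singleton trace inside $\{v_3,v_4\}$). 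Either way the locating-dominating condition fails, so $\lambda(F_{H_i}^f)\ge 4$.

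For the upper bound I would exhibit an explicit set of size $4$. For $H_1$, in both sub-cases, the set $L=\{u_1^1,u_3,v_1^1,v_3\}$ works: its complement is $\{u_1^2,u_4,v_1^2,v_4\}$, and computing $N(w)\cap L$ for $w$ in the complement yields four distinct non-empty subsets of $L$ (the exact four subsets differ between the two sub-cases, since $v^\ast$ changes which functi neighbour the $u$-vertices have, but in each sub-case they are pairwise distinct and non-empty). For $H_2$ the set $L=\{u_1^1,u_2^1,v_1^1,v_2^1\}$ works similarly, with complement $\{u_1^2,u_2^2,v_1^2,v_2^2\}$ and four distinct non-empty traces. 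Hence $\lambda(F_{H_i}^f)\le 4$, and combined with the lower bound, $\lambda(F_{H_i}^f)=4$ for $i=1,2$.

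The only real work here is the bookkeeping: writing down the adjacency list of $F_{H_i}^f$ correctly in each of the three configurations and checking the handful of traces. There is no deep obstacle; the one point that needs care is the reduction to a single representative $v^\ast$ per automorphism orbit, which is exactly what keeps the number of configurations down to three and lets the same size-$4$ witness set be reused for both $H_1$ sub-cases.
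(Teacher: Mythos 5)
Your proof is correct and follows essentially the same route as the paper: Proposition \ref{Prop2} applied to the three disjoint $2$-element twin sets gives $\lambda\ge 3$, an inspection of the candidate $3$-sets (one vertex per twin set) rules out $\lambda=3$, and an explicit $4$-element witness gives the upper bound. Your automorphism-orbit reduction and your unified explanation of why every candidate $3$-set fails merely tidy up the paper's ``without loss of generality'' choices and its case-by-case enumeration of the eight candidates; the witness sets differ but both check out.
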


\begin{proof}
Since $f$ is a constant function, therefore assume that
$I=\{v\}\subset A_2$.  If $v\in T^t_j\subset A_2$ for some $j$
$(1\le j \le 2)$, then $T^t_j\subset A_2$ is not twin set in
$F_{H_i}^f$. Similarly if $v\in T^s\subset A_2$, then $T^s\subset
A_2$ is not a twin set in $F_{H_i}^f$. Let a set $L\subset
V(F_{H_i}^f)$, such that $L$ has exactly one element from each twin
set of $F_{H_i}^f$. We discuss the following cases.
\begin{enumerate}
\item If $i=1$ and $v\in T^t_1\subset A_2$, then without loss of
    generality assume $v=v_1^1$. Also, the sets $T^t_1,T^s\subset A_1$
    and $T^s\subset A_2$ are twin sets in the corresponding $F_{H_1}^f$ each
    with the cardinality 2.
    Thus by Proposition \ref{Prop2}, $\lambda(F_{H_1}^f)\ge 3$. There are 8 possible
    choices for the set $L$. If we take $L=\{u_1^1,u_3,v_3\}$, then $N(v_1^2)\cap L=N(v_4)\cap L$ which implies that
     $L$ is not a locating-dominating set. Similarly, the
    other 7 choices for $L$ do not form locating-dominating set and hence, $\lambda(F_{H_1}^f)\ge 4$.
    Also, the set $\{u_1^1,u_3,v_3,v_4\}$ forms a locating-dominating set of
    $F_{H_1}^f$. Hence, the result follows.
\item If $i=1$ and $v\in T^s\subset A_2$,
    then without loss of generality assume $v=v_3$. The corresponding
    functigraph $F_{H_1}^f$ has twin sets $T^s,T_1^t\subset A_1$
    and $T^t_1\subset A_2$ each of the cardinality 2. Then it can be seen that
    8 possible choices of the set $L$ do not form locating-dominating set
     of $F_{H_1}^f$, therefore
    $\lambda(F_{H_1}^f)\ge 4$. Also, the set $\{u_1^1,u_3,v_1^1,v_4\}$
    forms a locating-dominating set of $F_{H_1}^f$.
\item If $i=2$ and $v\in T_1^t\subset A_2$, then $T^s=\emptyset$.
    Without loss of generality assume $v=v_1^1$. The corresponding
    functigraph $F_{H_2}^f$ has twin sets $T_1^t,T_2^t\subset A_1$
    and $T^s\subset A_2$ each of the cardinality 2. By Proposition
    \ref{Prop2}, $\lambda(F_{H_1}^f)\ge 3$. There are 8 possible choices
    for the set $L$. If $L=\{u_1^1,u_2^1,v_2^1\}$, then
    $N(v_2^2)\cap L=\emptyset$. If $L=\{u_1^1,u_2^1,v_2^2\}$,
    then $N(v_2^1)\cap L=\emptyset$. Thus, $L$ is not locating-dominating
     set. Similarly, the remaining 6 choices for the set $L$ do not form
     locating-dominating set. Hence, $\lambda(F_{H_2}^f)\ge 4$.
    Also the set $\{u_1^1,u_2^1,v_2^1,v_2^2\}$ forms a
    locating-dominating set of $F_{H_2}^f$. The case when $i=2$ and
    $v\in T_2^t\subset A_2$ can also be proved by the similar arguments.
\end{enumerate}
\end{proof}

\begin{figure}
  \includegraphics[width=14cm]{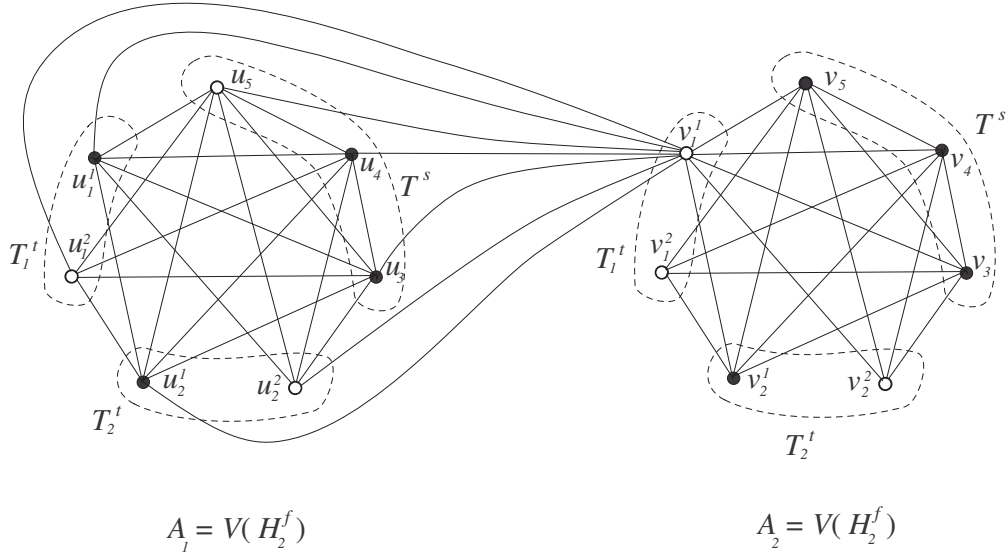}
  \caption{The labeling of the vertices of $F_{H_i}^f$ when $n=7$, $i=2$,  $f$
  is a constant function, $I=\{v\}$ and $v$ is a non-saturated vertex of $A_2$.
  Black vertices form a locating-dominating
  set of $F^f_{H_i}$ with the minimum cardinality.}
  \label{Fig3}
\end{figure}

\begin{Theorem}
Let $G$ be the complete graph of order $n\ge 5$ and $f$ be a
constant function such that $I=\{v\}\subset A_2$, then
$$\lambda(F_{H_i}^f)=\left\{
\begin{array}{ll}
2n-2i-3, \hspace{0.2cm} if \hspace{0.3cm} 1
\leq i \leq \lfloor \frac{n}{2} \rfloor-1 \text{ and v is a saturated vertex of $A_2$} &\\
2n-2i-2, \hspace{0.2cm} if \hspace{0.3cm} 1
\leq i \leq \lfloor \frac{n}{2} \rfloor-1 \text{ and v is a non-saturated vertex of $A_2$}&\\
n-1 \hspace{1.5cm} if \hspace{0.3cm} n \hspace{.1cm} \text{is even
and}
 \hspace{0.1cm}i= \frac{n}{2}&  \\
2\lfloor \frac{n}{2} \rfloor \hspace{1.6cm} if \hspace{0.3cm} n
\hspace{.1cm} \text{is odd and} \hspace{.1cm} i=\lfloor \frac{n}{2}
\rfloor.&
\end{array}
\right.$$
\end{Theorem}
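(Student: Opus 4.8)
The plan is to reduce, up to isomorphism of $F_{H_i}^f$, to two positions for the unique vertex $v$ of $I$: either $v$ is saturated (all saturated vertices being mutually adjacent twins), say $v=v_n\in T^s\subset A_2$, or $v$ is non-saturated, say $v$ is one of the two vertices of $T^t_1\subset A_2$ (the two vertices of each $T^t_j$ are non-adjacent twins and, since $f$ is constant, the $T^t_j$'s and the vertices inside each may be permuted). The first bookkeeping step is to record the twin sets of $F_{H_i}^f$ itself: in $A_1$ the sets $T^t_1,\dots,T^t_i$ and $T^s$ persist; in $A_2$ the twin set that contained $v$ is destroyed, since $v$ now has all of $A_1$ in its neighbourhood and hence is a twin of no vertex, and in the non-saturated case the second vertex $v'$ of $T^t_1\subset A_2$ also becomes a twin of no vertex, with $N(v')=A_2\setminus\{v,v'\}$, while the remaining $T^t_j$ and $T^s$ (respectively $T^s\setminus\{v\}$) stay twin sets of $F_{H_i}^f$.

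For the lower bounds, I would first apply Proposition \ref{Prop2} to the twin sets just listed. A direct count of $\sum(|T|-1)$ already gives equality in three of the four situations: if $v$ is saturated and $1\le i\le\lfloor\frac{n}{2}\rfloor-1$ it yields $\lambda(F_{H_i}^f)\ge(n-i-1)+(n-i-2)=2n-2i-3$ (the $A_2$-contribution losing one unit because $v$ has left $T^s$); if $n$ is even and $i=\frac{n}{2}$ it yields $\lambda(F_{H_i}^f)\ge\frac{n}{2}+(\frac{n}{2}-1)=n-1$; and if $n$ is odd, $i=\lfloor\frac{n}{2}\rfloor$, $v$ saturated it yields $\lambda(F_{H_i}^f)\ge\frac{n-1}{2}+\frac{n-1}{2}=n-1$. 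In the two remaining situations ($v$ non-saturated, with $1\le i\le\lfloor\frac{n}{2}\rfloor-1$ or with $n$ odd and $i=\lfloor\frac{n}{2}\rfloor$) Proposition \ref{Prop2} falls one short, and the crux is an extra step in the spirit of the ``two-element set'' arguments of Theorems \ref{f12} and \ref{f13}: if a locating-dominating set $L_D$ met the Proposition \ref{Prop2} bound with equality, then $L_D$ would contain exactly $|T|-1$ vertices of each twin set and nothing else, so $v,v'\notin L_D$ and at least one saturated vertex $w$ of $A_2$ would lie outside $L_D$ (when $n$ is odd and $i=\lfloor\frac{n}{2}\rfloor$ this $w$ is $v_n$, excluded automatically). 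But then $N(v')\cap L_D=L_D\cap A_2=N(w)\cap L_D$, contradicting the locating property; hence $\lambda(F_{H_i}^f)\ge 2n-2i-2$, which reads $n-1=2\lfloor\frac{n}{2}\rfloor$ in the odd extremal case.

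For the matching upper bounds I would, in each case, exhibit an explicit set built by the recipe ``drop one representative per twin set, then repair''. Concretely: put into $L$ one vertex of each $T^t_j\subset A_1$ together with all but one vertex of $T^s\subset A_1$; put into $L$ one vertex of each surviving $T^t_j\subset A_2$ together with a suitable number of vertices of $T^s\subset A_2$; and, in the non-saturated case, add to $L$ either $v'$ or one extra saturated vertex of $A_2$, tuning the counts so that $|L|$ is $2n-2i-3$, $2n-2i-2$ or $n-1$ as required (for $n$ even and $i=\frac{n}{2}$, for instance, $L=\{u^1_1,\dots,u^1_{n/2}\}\cup\{v^1_2,\dots,v^1_{n/2}\}$ works). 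Checking that such an $L$ is a locating-dominating set is mechanical: every vertex outside $L$ is of one of a few types — a dropped twin of some $T^t_j$ in $A_1$ or in $A_2$, the dropped saturated vertex of $A_1$ or of $A_2$, the special vertex $v$, or its orphaned partner $v'$ — and for each type one writes down $N(\cdot)\cap L$ and verifies the traces are nonempty and pairwise distinct; distinctness between an $A_1$-trace and an $A_2$-trace is automatic (disjoint vertex sets, both nonempty) and within one side it is just a matter of which representative is missing. The hypothesis $n\ge 5$ enters exactly here: it is what keeps these traces nonempty at the extremal values $i=\lfloor\frac{n}{2}\rfloor-1$ and $i=\lfloor\frac{n}{2}\rfloor$ (for $n=4$ the answers genuinely jump, as the preceding lemma records).

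The step I expect to be the real obstacle is the lower bound in the two ``off by one'' cases: not the Proposition \ref{Prop2} inequality itself, but ruling out equality in it. The device that does this is the pairing of the orphaned partner $v'$ of $v$ against a dropped saturated vertex $w$ of $A_2$, both of which are forced outside a hypothetical extremal $L_D$ and then have the same trace $L_D\cap A_2$. Everything else is careful but routine case analysis of the same flavour as Section 3.
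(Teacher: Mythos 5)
Your proposal is correct and follows essentially the same route as the paper: identify the surviving twin sets of $F_{H_i}^f$, apply Proposition \ref{Prop2}, close the one-unit gap in the non-saturated cases by observing that an extremal $L_D$ would force $N(v')\cap L_D = L_D\cap A_2 = N(w)\cap L_D$ for the orphaned partner $v'$ and a dropped saturated vertex $w$ of $A_2$, and match with explicit sets built from twin-set representatives. Your organization (saturated versus non-saturated, with the $i=\lfloor n/2\rfloor$ subcases collapsing to $n-1=2\lfloor n/2\rfloor$) is a cleaner packaging of the paper's five-case analysis, but the underlying argument is the same.
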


\begin{proof}
Since $f$ is a constant function, therefore the collection
$\{T^t_1,T^t_2,...,T^t_i, T^s\}$ of twin subset of $A_2$ are also
twin sets in the corresponding $F_{H_i}^f$. If $v\in T^t_j\subset
A_2$ for some $j$ $(1\le j \le i)$, then $T^t_j\subset A_2$ is not
twin set in $F_{H_i}^f$. Similarly if $v\in T^s\subset A_2$, then
$T^s\subset A_2$ is not twin set in $F_{H_i}^f$. We discuss the
following cases
\begin{enumerate}
    \item When $1\leq i \leq \lfloor \frac{n}{2}
    \rfloor-1$ and $v\in T^t_j\subset A_2$ for some
    $j$ $(1\le j \le i)$. Then without loss of generality assume that
    $v=v_j^1$. The corresponding $F_{H_i}^f$ have twin sets
    $T^t_1,T^t_2,...,T^t_i, T^s\subset A_1$ and
    $T^t_1,...,T^t_{j-1},T^t_{j+1},...,T^t_i, T^s\subset A_2$. By
    Proposition \ref{Prop2}, $\lambda(F_{H_i}^f)\ge 2i-1+2(n-2i-1)=2n-2i-3$.
    Let a set $L\subset V(F_{H_i}^f)$, such that $L$ has all the elements except
    one element of each of these twin subsets. There are $2^{2i-1}(n-2i-1)^2$
     choices for choosing the elements of the set $L$. Each choice for the set $L$ does not contain an element of the set $T^s\subset
     A_2$. Without loss of generality assume that $v_n\not\in L$, then $N(v_j^2)\cap L=N(v_n)\cap L$.
     Thus, $L$ is not a locating-dominating set of $F_{H_i}^f$ for all choices of the set $L$.
     Hence, $\lambda(F_{H_i}^f)\ge 2n-2i-2$.
     Also the set $\{u_1^1,u_2^1,...,u_i^1,u_{2i+1},...,u_{n-1},v_1^1,v_2^1,
    ...,v_{j-1}^1,v_{j+1}^1,...,v_i^1,v_{2i+1},$ $...,v_{n}\}$ is a
    locating-dominating set with cardinality $2n-2i-2$. Thus
    $\lambda(F_{H_i}^f)=2n-2i-2$.
\item When $1\leq i \leq \lfloor \frac{n}{2}
    \rfloor-1$ and $v\in T^s\subset A_2$. Then without loss of generality assume that
    $v=v_n$. The corresponding $F_{H_i}^f$ have twin sets
    $T^t_1,T^t_2,...,T^t_i, T^s\subset A_1$ and
    $T^t_1,...,T^t_i, T^s\setminus\{v_n\}\subset A_2$. By
    Proposition \ref{Prop2}, $\lambda(F_{H_i}^f)\ge 2i+(n-2i-1)+(n-2i-2)=2n-2i-3$.
    Also the set $L=\{u_1^1,u_2^1,...,u_i^1,u_{2i+1},...,u_{n-1},v_1^1,$ $v_2^1,
    ...,v_i^1,v_{2i+1},...,v_{n-2}\}$ is a locating-dominating set of $F_{H_i}^f$
     with the cardinality $2n-2i-3$. Thus
    $\lambda(F_{H_i}^f)=2n-2i-3$.
    \item When $i= \frac{n}{2}$ and $n$ is even, then $T^s=\emptyset$ and
    $v\in T^t_j\subset A_2$ for some $j$ $(1\le j \le i)$. Then without loss of generality assume that
    $v=v_j^1$. The corresponding $F_{H_i}^f$ have twin sets $T^t_1,T^t_2,...,T^t_i\subset A_1$ and
    $T^t_1,...,T^t_{j-1},T^t_{j+1},...,T^t_i\subset A_2$. By Proposition \ref{Prop2}, $\lambda(F_{H_i}^f)\ge i+(i-1)=n-1$.
    Also, the set
    $L=\{u_1^1,u_2^1,...,u_i^1,v_1^1,v_2^1,...,v_{j-1}^1$, $v_{j+1}^1,...,v_i^1\}$ is a locating-dominating set
    of $F_{H_i}^f$ with cardinality $n-1$. Thus
    $\lambda(F_{H_i}^f)=n-1$.
\item When $i= \lfloor \frac{n}{2}\rfloor$, $n$ is odd and $v\in T^t_j\subset A_2$ for some
    $j$ $(1\le j \le i)$. In this case,
    $T^s=\{v_n\}$ is not a twin set. Without loss of generality assume that
    $v=v_j^1$. The corresponding $F_{H_i}^f$ have twin sets
    $T^t_1,T^t_2,...,T^t_i\subset A_1$ and
    $T^t_1,...,T^t_{j-1},T^t_{j+1},...,T^t_i\subset A_2$. By Proposition \ref{Prop2},
     $\lambda(F_{H_i}^f)\ge i+(i-1)=2i-1$.
    Let a set $L\subset V(F_{H_i}^f)$, such that $L$ has all the elements except
    one element of each of these twin subsets. Each choice for the set $L$ does not contain
    $v_n$. Then, $N(v_j^2)\cap L=N(v_n)\cap L$ for all choices
    of the set $L$. Thus, $L$ is not a locating-dominating set of $F_{H_i}^f$ for all choices
     of the set $L$ and hence, $\lambda(F_{H_i}^f)\ge 2i$. Also, the set $\{u_1^1,u_2^1$ $,...,u_i^1,v_1^1,v_2^1,
    ...,$ $v_{j-1}^1,v_j,$ $v_{j+1}^1,...,v_i^1\}$ is a
    locating-dominating set with cardinality $2i$. Thus
    $\lambda(F_{H_i}^f)=2\lfloor \frac{n}{2}\rfloor$.

\item When $i= \lfloor \frac{n}{2}\rfloor$, $n$ is odd and $v\in T^s\subset A_2$. In this case, $T^s=\{v_n\}$ is not a twin set and $v=v_n$. The corresponding $F_{H_i}^f$ have twin sets
    $T^t_1,T^t_2,...,T^t_i\subset A_1$ and
    $T^t_1,...,T^t_i\subset A_2$. By
    Proposition \ref{Prop2}, $\lambda(F_{H_i}^f)\ge 2i$. Also, the set $L=\{u_1^1,u_2^1$
    $,...,u_i^1,v_1^1,v_2^1,$ $...,v_i^1\}$ is a
    locating-dominating set with cardinality $2i$. Thus
    $\lambda(F_{H_i}^f)=2\lfloor \frac{n}{2}\rfloor$.

\end{enumerate}

\end{proof}

\end{document}